\renewcommand{\AA}{\mathbb{A}}
\newcommand{\CC}{\mathbb{C}}
\newcommand{\JJ}{\mathbf{P}}
\newcommand{\JJbar}{\overline{\mathbf{P}}}
\newcommand{\NN}{\mathbb{N}}
\newcommand{\ZZ}{\mathbb{Z}}
\newcommand{\QQ}{\mathbb{Q}}
\newcommand{\PP}{\mathbb{P}}
\newcommand{\oO}{\mathcal{O}}
\newtheorem{lemma}{Lemma}
\newtheorem{conjecture}[lemma]{Conjecture}
\newtheorem{corollary}[lemma]{Corollary}
\newtheorem{theorem}[lemma]{Theorem}
\newtheorem{definition}[lemma]{Definition}
\newtheorem{example}[lemma]{Example}
\newtheorem{proposition}[lemma]{Proposition}
\DeclareMathOperator{\res}{res}
\DeclareMathOperator{\Hom}{Hom}
\DeclareMathOperator{\Ext}{Ext}
\newcommand*{\hHom}{\mathcal{H}\!\mathit{om}}
\newcommand*{\eExt}{\mathcal{E}\!\mathit{xt}}
\newcommand{\Hilbert}[1]{#1^{[\star\,]}}
\title{The Hilbert scheme of a plane curve singularity\\ 
and the HOMFLY polynomial of its link}
\date{}
\author{Alexei Oblomkov \and Vivek Shende}
\begin{document}

\begin{abstract}
The intersection of a complex plane curve with a small three-sphere
surrounding one of its singularities is a non-trivial link.  
The refined punctual Hilbert schemes of the singularity 
parameterize subschemes supported at the singular point of fixed
length and whose defining ideals have a fixed
 number of generators.  
We conjecture that the generating function of Euler characteristics 
of refined punctual Hilbert schemes is the HOMFLY polynomial of the 
link.  The conjecture is verified  for irreducible singularities 
$y^k\! = x^n$, whose links are the $(k,n)$ torus knots, and 
 for the singularity 
$y^4 = x^7 - x^6 + 4x^5y + 2x^3 y^2$, whose link is 
the (2,13) cable of the trefoil.
\end{abstract}

\maketitle

\section{Introduction}

Let $C$ be an integral complex curve with at worst 
locally planar singularities.  We write $C^{[l]}$ for its Hilbert
scheme of points \cite{G}, 
the moduli space of closed subschemes of $C$ of dimension
zero and length $l$.   We are interested in 
the Euler characteristics\footnote{
For a  topological space $Y$, we write
$\chi(Y) = \sum (-1)^i \dim \mathrm{H}^i(Y,\QQ)$; for specificity, we
take singular cohomology.  When $Y$ is a complex algebraic variety,
this is equal to the analogous $\chi_c(Y)$ formed from compactly 
supported cohomology.  If $Z$ a closed subvariety of $Y$, 
then $\chi(Y \setminus Z) = \chi(Y) - \chi(Z)$.   
If $E \to B$ is a topologically locally trivial 
fibration with fibre $F$, then
$\chi(E) = \chi(F) \chi(B)$.  In particular, if
$Y$ admits the action of an algebraic torus $T = (\CC^*)^n$, 
then $\chi(Y) = \#Y^T$. 
We use integral notation for weighted
Euler characteristics: if $\phi:Y \to A$ is a constructible function
taking values in an abelian group, then 
$\int_Y \phi\, d\chi := \sum a \cdot \chi(\phi^{-1}(a))$.
} 
of these spaces.
 For notational convenience, we write
$\Hilbert{C}:= 
\coprod {C}^{[l]}$, and separate the components by the function
$ l: \Hilbert{C}  \to  \NN$ counting the number of points.
In the case of a smooth curve $C_{\mathrm{sm}}$, not necessarily complete,
 the following formula is well known:
\begin{equation} \label{eq:eulofhilb:curve}
 \int\limits_{\Hilbert{C_{\mathrm{sm}}}} \!\! q^{2l}\, d\chi = 
 \left(\frac{1}{1-q^2}\right)^{\chi(C_{\mathrm{sm}})}
\end{equation}

For any point $p \in C$, denote by 
$C^{[l]}_p$ the closed subvariety of $C^{[l]}$ whose closed points
parameterize subschemes of $C$ 
that are set-theoretically supported at $p$.  We collect
these into $\Hilbert{C}_p = \coprod C^{[l]}_p$ which we equip with a 
function $l$ giving the length.  Now let $p_i$ be the singular points
of $C$, and take $C_{\mathrm{sm}}:=C \setminus \coprod p_i$. Stratifying 
$\Hilbert{C}$ by the length supported at the 
singular points, we find:
\begin{equation*}
  \int\limits_{\Hilbert{C}} \! q^{2l} \, d\chi \,\,\, = 
  \int\limits_{\Hilbert{{C_{\mathrm{sm}}}}} \! q^{2l} \, d\chi 
  \times \prod_i \int\limits_{\Hilbert{{C}_{p_i}}} \! q^{2l} \, d\chi \,\,\, =\,\,\, 
  \left(\frac{1}{1-q^2}\right)^{\chi(C_\mathrm{sm})}
  \prod_{i} \int\limits_{\Hilbert{{C}_{p_i}}} \! q^{2l} \, d\chi
\end{equation*}

The contributions from the singularities can be rather complicated.  
We will describe them conjecturally in terms of the topology of $C$ 
near its singular points.  Embed $C$ in a surface.  
Intersecting $C$ with a small 3-sphere 
around $p \in C$ yields the {\em link of $C$ at $p$}: 
a collection of oriented circles in $S^3$, 
one for each analytic branch of $C$ at $p$.   
For example, the curve $y^k = x^n$
intersects a sphere around the origin in the right-handed $(k,n)$ torus link.  
Milnor \cite{M} has studied links of hypersurface singularities, 
and shows in particular that the complement in the sphere of
any such link is a smooth fibre bundle over the circle.  
Eisenbud and Neumann \cite{EN} describe how to pass from
the combinatorics of the Puiseux data of the
singularity to a presentation of the link.  Campillo, Delgado, and
Gusein-Zade have shown that the Alexander polynomial of the link carries
data equivalent to the Hilbert series of a certain filtration on the
ring of functions on the singularity \cite{CDG}.

Denote by $\overline{\mathbf{P}}(L)$ the HOMFLY polynomial of an 
oriented link $L \subset S^3$. 
It is an element of  $\ZZ[a^{\pm 1},(q-q^{-1})^{\pm 1}]$, and may be computed
from the relations
\begin{eqnarray}
  \label{eq:skein1}
  a \, \JJbar(\undercrossing) -  a^{-1} \, 
  \JJbar(\overcrossing) & = & (q - q^{-1})  
  \, \JJbar(\smoothing) \\
  a - a^{-1} & = & (q-q^{-1})\JJbar(\bigcirc)
\end{eqnarray}
The terms of Equation (\ref{eq:skein1}) should be interpreted
as the HOMFLY polynomials of three link diagrams which
are identical away from a small neighborhood, and are as depicted within it.  
It is not obvious
that these relations define a function on diagrams, 
let alone knots, but it is true \cite{HOMFLY}.  
It is often convenient to adopt the alternative normalization
$\JJ(L) := \JJbar(L)/\JJbar(\bigcirc)$.
The topological meaning of the HOMFLY polynomial
is unknown.  

\begin{conjecture} 
  \label{conj:justqs}
  Let $C$ be a curve in a smooth surface, $C$ itself 
  smooth away from points $p_i$.  Let $L_i$ be the link of $C$ at $p_i$, and
  let $\mu_i$ be the Milnor number of the singularity at $p_i$.  Then,
  \begin{equation*}
    \int\limits_{\Hilbert{C}} \! q^{2l} \, d\chi \,\,\, = (1-q^2)^{-\chi(C)}
    \prod_i \left[ (q/a)^{\mu_i} \JJ(L_i) \right]_{a=0}
  \end{equation*}
\end{conjecture}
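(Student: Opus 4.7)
The identity already established in the excerpt—multiplicativity over singular points and the known smooth contribution—reduces Conjecture~\ref{conj:justqs} to the local assertion
\begin{equation*}
Z(C_p)(q) \,:=\, \int_{\Hilbert{C_p}} \! q^{2l}\, d\chi \,\,=\,\, \left[(q/a)^{\mu_p} \JJ(L_p)\right]_{a=0}
\end{equation*}
for each germ $(C_p,p)$. My plan is to fix one such germ and attempt to prove this by an induction whose combinatorial backbone is the Eisenbud--Neumann splice diagram of the algebraic link $L_p$, equivalently the iterated cabling description of such a link.

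In this induction the trivial diagram corresponds to the smooth germ, where $L_p$ is the unknot, $\mu_p=0$, and both sides of the local identity collapse to $(1-q^2)^{-1}$; the single-node diagrams correspond to torus links $y^k=x^n$. For the inductive step I would exploit the fact that each node of the splice diagram is, on the link side, a cabling operation whose effect on $\JJ$ is governed by a known combinatorial rule via the Hecke algebra trace, and on the curve side is an equally well-understood operation---adjoining a new Puiseux pair to the local ring. The heart of the proof would then be to derive a compatible cabling transformation rule for $Z$: given the series of a germ and the data of a new Puiseux pair, express the series of the cabled germ in terms of these data in a form that, after multiplying by $(q/a)^{\mu}$ and extracting the $a^0$-coefficient, matches the HOMFLY cabling formula.

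Before attempting this general step I would carry out two concrete checks that pin down the shape of the desired rule and serve as required inputs to the induction. First, the torus link family $y^k=x^n$: here Piontkowski provides a cell decomposition of $\Hilbert{C_p}$ indexed by lattice paths in a $k \times n$ rectangle, while $\JJ(L_p)$ is computable from the Jones--Ocneanu trace on a standard positive braid, and matching the two combinatorial expressions is the content of the first family of examples already verified in the paper. Second, the cabled example $y^4 = x^7-x^6+4x^5y+2x^3y^2$ appearing in the abstract, whose splice diagram already has two nodes and which therefore exhibits precisely the features that a general cabling rule must capture.

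The principal obstacle, I expect, is precisely the cabling transformation on the Hilbert scheme side. Unlike the HOMFLY polynomial, the generating function $Z$ is not manifestly built from topological surgery operations, and there is no obvious a priori reason for $Z$ to transform under cabling by the same combinatorial rule that the Hecke algebra trace imposes on $\JJ$. A plausible route is via the Abel--Jacobi map from $\Hilbert{C_p}$ to the compactified Jacobian of $C_p$: its fibers are projective spaces by Altman--Kleiman, and the compactified Jacobian of an iterated cable admits a stratification built from those of its constituent branches, which after careful bookkeeping of Euler characteristics might deliver exactly the required formula. Carrying this out for singularities with several branches whose linking structure is arbitrarily nested is where the proposal would stand or fall.
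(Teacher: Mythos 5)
This statement is a \emph{conjecture}: the paper does not prove it, and neither do you. What the paper actually establishes is (i) the reduction to a local statement at each singular point (the stratification computation in the introduction, which you correctly reproduce), (ii) the observation that Conjecture~\ref{conj:homfly} implies Conjecture~\ref{conj:justqs}, (iii) the $a=-1$ specialization via Campillo--Delgado--Gusein-Zade, (iv) the torus-knot family $y^k=x^n$ by a $\CC^*$-fixed-point/staircase count compared against Jones's formula, and (v) the single cabled example $\langle 4,6,13\rangle$ by the semigroup stratification of Section~\ref{sec:Hilbert}. Your proposal reduces to the same local statement and identifies the same two families of checks, so up to that point you are aligned with the paper's evidence. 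But the heart of your argument --- the inductive step in which a splice-diagram node induces a ``cabling transformation rule'' for $Z(C_p)$ compatible with the Hecke-algebra cabling formula for $\JJ$ --- is never derived; you name it as the principal obstacle and offer only a speculative route through the Abel--Jacobi map. That step is not a technical detail to be filled in: it is precisely the open content of the conjecture, and the paper itself explicitly flags the absence of any such transformation rule (its ``Towards a proof'' section notes that one would need an analogue of the Eisenbud--Neumann formula for the HOMFLY polynomial, which is not known). A proof outline whose inductive step is ``derive the required compatibility'' does not constitute a proof.

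Two smaller points. First, your induction as described only covers unibranch germs (iterated cables of knots); a general plane curve singularity has several branches and its splice diagram is a tree with multiple boundary components, so even the \emph{shape} of the induction needs to be reworked there --- you acknowledge this only in the final sentence. Second, the attribution in your first check is off: the staircase enumeration of monomial ideals in $\CC[[t^k,t^n]]$ used in Section~\ref{sec:torus} goes back to Brian\c con and Iarrobino; Piontkowski's cell decompositions concern compactified Jacobians and are the relevant comparison for Section~\ref{sec:4613}, not for the torus-knot computation.
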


\noindent {\bf Aside.} 
We pause to mention the role of the series on the left hand side of the 
conjecture in the curve-counting theory of Pandharipande and Thomas.
If $\tilde{g}$ and $g$ 
are the geometric and arithmetic genera of $C$, then there are 
integers $n_h$ for $\tilde{g} \le h \le g$ such that \cite{PT3}
\begin{equation}
  \int\limits_{\Hilbert{C}} \! q^{2l} \, d\chi \,\,\, = \,\,\,
  q^{2g-2} \sum_{h=\tilde{g}}^g n_h(C) (q^{-1} - q)^{2h-2}
\end{equation}
The second author has shown \cite{S} that
these integers are positive, and record the degree 
of the subvariety of the versal deformation of $C$ 
parameterizing deformations with $g-h$ nodes.  
In the simplest cases, for instance if $C$ is a curve in a Fano surface of an
irreducible homology class $\gamma$, the $n_h(C)$ give the local
contribution of $C$ to the Gopakumar-Vafa \cite{GV} invariant
measuring the number of genus $h$ curves of class $\gamma$ in the local
Calabi-Yau formed by the canonical bundle of the surface. 
We are unaware of any physical interpretation of such local contributions:
roughly speaking, 
the Gopakumar-Vafa
numbers arise from identifying a certain space of BPS states
in M-theory compactified on the Calabi-Yau of interest with the cohomology
of the moduli space of rank one sheaves supported on {\em any} curve of class
$\gamma$.  

There is also a connection to quantum field theory on the
other side of the conjecture: Witten \cite{W} has explained that
the HOMFLY polynomial may be understood as collecting, over $k$ and $N$, the
expectation value of the Wilson loop around the link
in the $\mathrm{SU}(N)$ Chern-Simons theory on the three-sphere at
level $k$.  One might hope to give a ``physics proof'' 
of the above conjecture via 
a theory which, on the one hand gives rise to local Gopakumar-Vafa numbers, 
and, on the other, specializes
near any point of $C$ to Chern-Simons 
theory on the bounding 3-sphere.

\vspace{2mm}

We turn to the question of 
how the HOMFLY polynomial, rather than merely its $a \to 0$ limit, 
may be recovered from the Hilbert scheme.  We define an incidence
variety:
\[C_p^{[l]} \times C_p^{[l+m]} \supset 
C_p^{[l,\,l + m]} := \{(I,J)\, | \, 
I \supset J \supset I\oO_C(-p)\}\]
\begin{conjecture} 
  \label{conj:homfly}
    Let $p \in C$ be a point on a locally planar curve, let $L_{C,p}$ 
  be the link of $C$ at $p$, and let $\mu$ be the Milnor number
  of the singularity at $p$.  Then,
  \begin{equation*}
    \JJbar(L_{C,p})  =   (a/q)^{\mu-1} \sum_{l,m}
    q^{2l} (-a^2)^m \chi(C_p^{[l,\,l + m]})
  \end{equation*}
\end{conjecture}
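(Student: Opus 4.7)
The plan is to localize the problem to fixed points of a torus action and then match the resulting combinatorics with a skein-theoretic evaluation of the HOMFLY polynomial.

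First, I would verify the normalization. For a smooth germ $C_p$, one has $\mu=0$ and $C_p^{[l]}$ is a single point at each length $l\geq 0$, with ideal $(t^l)$; hence $m\equiv 1$ and the right-hand side collapses to $(1-q^2)\sum_{l\geq 0} q^{2l}=1$. This matches $\JJ$ on the unknot and recovers (\ref{eq:skein2}).

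For the torus-knot case $y^k=x^n$ with $\gcd(k,n)=1$, I would exploit the $\CC^*$-action on $\AA^2$ with weights $(n,k)$, which descends to $C_p$ and hence to $\Hilbert{C_p}$. Since Euler characteristic with respect to the strata-wise measure is supported on the fixed locus of any $\CC^*$-action, the integral reduces to a sum over $\CC^*$-fixed points, which are precisely the monomial ideals in $\CC[[x,y]]/(y^k-x^n)$. These are in bijection with the $\Gamma$-submodules $\Delta \subsetneq \NN$ of the numerical semigroup $\Gamma=\langle n,k\rangle$; on such a fixed point, $l(\Delta)=|\NN\setminus\Delta|$ and $m(\Delta)$ is the number of minimal $\Gamma$-generators of $\Delta$. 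The integral becomes
\begin{equation*}
  \sum_{\Delta} q^{2l(\Delta)}(1-a^2)^{m(\Delta)-1},
\end{equation*}
which I would match with the Jones-Rosso formula for the HOMFLY polynomial of the $(k,n)$ torus knot via its Schur expansion on hooks: hook partitions ought to correspond bijectively to a natural two-part decomposition of the staircase of $\Delta$. The same plan applies to the $(2,13)$-cable example after identifying a weighted-homogeneous form of $y^4 = x^7 - x^6 + 4x^5y + 2x^3 y^2$ admitting a $\CC^*$-action, combined with a cabling-formula evaluation of HOMFLY on the trefoil side.

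The main obstacle is the general case, where no global torus action exists. A natural route would be to construct a wall-crossing identity on the Hilbert-scheme side mirroring (\ref{eq:skein1}): given a one-parameter degeneration of a plane-curve germ that changes a crossing in the link, one would stratify $\Hilbert{C_p}$ according to how ideal generators interact with the crossing locus, and show that the weight $(1-a^2)^{m-1}$ is calibrated so that the stratum where $m$ jumps reproduces the $(q-q^{-1})$ normalization in the skein. No such wall-crossing formula is presently available, and because the topological meaning of HOMFLY is itself unknown, this step is expected to require genuinely new geometric input rather than a routine computation.
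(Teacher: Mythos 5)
The statement you are addressing is a conjecture; the paper does not prove it, but offers evidence in special cases, and your proposal should be judged against that evidence. Your treatment of the torus-knot case $y^k=x^n$ is essentially the paper's Section~\ref{sec:torus}: localize to the $\CC^*$-fixed monomial ideals, encode them as staircases, and match the resulting sum against Jones's formula (the paper does the matching by a residue computation rather than a Schur-function expansion on hooks, but that is a cosmetic difference). One small correction there: the colength of the monomial ideal corresponding to a semigroup ideal $\Delta\subset\Gamma$ is $\#\,\Gamma\setminus\Delta$, not $\#\,\NN\setminus\Delta$. Your closing admission that the general case would need a new skein-type identity on the Hilbert-scheme side is also consistent with the paper, which raises exactly this as an open direction in its final section.

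The genuine gap is your plan for the cable example. The singularity $y^4=x^7-x^6+4x^5y+2x^3y^2$, i.e.\ $\oO=\CC[[t^4,t^6+t^7]]$, is \emph{not} weighted homogeneous, and no $\CC^*$-action on it exists: any quasi-homogeneous curve with a $4$-fold branching would be $y^4=x^n$, whose link is a torus knot rather than the $(2,13)$ cable of the trefoil, and whose semigroup is not $\langle 4,6,13\rangle$. Replacing $\oO$ by the associated graded ring $\CC[[\Gamma]]$ changes both the Hilbert scheme and the answer. The paper instead stratifies $\Hilbert{C_p}$ by the semigroup ideal $\nu(J)$ (Section~\ref{sec:Hilbert}) and must confront the fact that this stratification is \emph{not} a fixed-point count: the valuation of a combination $\sum f_i\phi_i$ can jump out of the module generated by the $\nu(f_i)$ (Lemmas~\ref{lem:whojumps} and~\ref{lem:sevenornine}), so some strata $U_{i+\Delta}$ are empty (e.g.\ $\Delta=(0,2)$ and $(0,2,11)$), some are proper subvarieties of the naive affine space $V_{i+\Delta}$, and the generator count $m$ is not constant on a stratum (e.g.\ $U_{i+(0,2,7,9)}$ contributes $2(1-a^2)^2-(1-a^2)$ rather than a single power of $1-a^2$). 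A naive sum over $\Gamma$-submodules weighted by $q^{2l}(1-a^2)^{m-1}$ would therefore give the wrong answer for this singularity; the case-by-case analysis of Section~\ref{sec:4613} is precisely what replaces the localization argument when the torus action is absent.
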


We may integrate out the incidence variety.  
By Nakayama's lemma, the possible $J$ for given $I$ are
parameterized by a Grassmannian inside 
$I / I\oO_C(-p) = I \otimes \CC_p$.  Define
\begin{eqnarray*}
   m:\Hilbert{C_p} & \to & \NN \\
    \, I & \mapsto & \dim I \otimes \CC_p
\end{eqnarray*}
Equivalently, $m(I)$ is the minimal number
of generators of the ideal $I \otimes \hat{\oO}_{C,p}$. 
The contribution of $I$ to the RHS of Conjecture \ref{conj:homfly} 
is $(1-a^2)^{m(I)}$, so we have the following equivalent formulation
(which we have given in terms of the normalized HOMFLY polynomial):

\vspace{2mm} 
{\bf Conjecture 2'.}
  {\em Let $p \in C$ be a point on a locally planar curve, let $L_{C,p}$ 
  be the link of $C$ at $p$, and let $\mu$ be the Milnor number
  of the singularity at $p$.  Then,}
  \begin{equation*}
    \JJ(L_{C,p})  =   (a/q)^{\mu} (1-q^2) \!\! \int\limits_{\Hilbert{{C_p}}}\!\!
    q^{2l} (1-a^2)^{m-1} \, d \chi
  \end{equation*}
\vspace{2mm}

 Conjecture \ref{conj:homfly} implies Conjecture \ref{conj:justqs} by a 
stratification argument.  The remainder of the article presents
evidence for the conjectures, which we briefly itemize below.

\begin{itemize}
\item 
  Setting $a = -1$ in Conjecture \ref{conj:homfly}' leaves the formula
  \begin{equation*} 
    \nabla(L_{C,p}) \,\,\,  = \,\,\,  (-q)^{-\mu} (1-q^2) \!\!\!\!\!
    \int\limits_{\oO_{C,p}/\oO_{C,p}^*}\!\!\!\!\!
    q^{2l} \, d \chi
  \end{equation*}
  where $\nabla$ is the Alexander polynomial in a suitable normalization, and
  $\oO_{C,p}/\oO_{C,p}^*$ parameterizes functions up to multiplication by
  invertible functions, or in other words, ideals with one generator.  
  After some technical rearrangements, this formula follows from
  a  theorem of Campillo, Delgado, and Gusein-Zade. 
  Details appear in Section \ref{sec:CDG}.
\item 
  The skein relation exhibits the invariance the HOMFLY polynomial 
  under the transformation $q \to -q^{-1}$; in fact,
  $\JJ \in \ZZ[(q-q^{-1})^{\pm 1}, a^{\pm 1}]$.  
  It is not evident
  from their expressions that our integrals enjoy the same property;
  nonetheless this was verified by Pandharipande and Thomas for the
  integral in Conjecture 1, and we explain in Section \ref{sec:symmetry}
  how to extend their methods to the integral in Conjecture 2. 
\item
  Singularities of the form $y^k = x^n$ carry a torus action which
  lifts to the Hilbert scheme.  In the case $\gcd(k,n) = 1$, the fixed
  points are isolated, and we count them in Section \ref{sec:torus} to
  calculate the integral in Conjecture 2.  
  Jones has calculated the HOMFLY polynomial of the corresponding
  $(k,n)$-torus knot, and the formulae match.
\item
  Section \ref{sec:Hilbert} describes, in the case of unibranch singularities,
  a stratification of the Hilbert scheme of points via the semigroup of
  the singularity.  This is closely related to Piontkowski's work on 
  computing the cohomology of compactified Jacobians \cite{Pi}.  
  In Section \ref{sec:4613} 
  we compute explicitly the strata of the Hilbert scheme 
  of the singularity with complete local ring $\CC[[t^4, t^6 + t^7]]$,
  and verify that the generating function of its Euler characteristics
  matches the HOMFLY polynomial of the (2,13) cable of the right-handed
  trefoil knot.
\end{itemize}

\noindent {\bf Remark.} It is natural to promote  
the left 
hand side of Conjecture \ref{conj:homfly} to an integral against the 
weight polynomial;  
the sum now computes what may be regarded
as the homology of a bigraded space.  On the other hand, the HOMFLY polynomial 
is known to arise as the Euler characteristic of the cohomology of a bigraded 
complex  \cite{KR}.  We will state a homological version of 
Conjecture \ref{conj:homfly} in a subsequent article \cite{ORS}.

\vspace{2mm} \noindent {\bf Acknowledgements.}  
We are grateful to Rahul Pandharipande for suggesting this area of 
study -- during a class for which the notes may be found on his website
\cite{P} -- and 
for advice throughout the project.  We have also enjoyed
 discussions with Margaret Doig, Eduardo Esteves, Paul Hacking,
Jesse Kass, Jacob Rasmussen, Giulia Sacc\`a, Sucharit Sarkar, 
Richard Thomas, and Kevin
Wilson, and  thank the referees for many helpful comments.
A.O. was partially supported by NSF grant DMS-0111298.

\section{Smooth points, nodes, and cusps} \label{sec:sanity}

We illustrate the conjecture with some elementary examples.  
Denote by $C_{2,n}$ the formal germ at the origin of the curve cut out 
by $y^2 = x^n$, and by $\oO_{2,n}$ its ring of functions.  
The link of this singularity is the right-handed
$(2,n)$ torus link $T_{2,n}$.  The first few of these:

\[T_{2,0} = \bigcirc \bigcirc \,\,\,\,\,\,\,\, 
T_{2,1} = \bigcirc \,\,\,\,\,\,\,\,
T_{2,2} = \HopfLink \,\,\,\,\,\,\,\,
T_{2,3} = \righttrefoil \]

Computing $\JJ(T_{2,n})$ is an elementary
exercise in the skein relation: smoothing a crossing yields 
$T_{2,n-1}$ and switching a crossing gives $T_{2,n-2}$.  This yields the recurrence
\[ 
\JJ(T_{2,n}) =  - a (q-q^{-1}) \JJ(T_{2,n-1}) + a^2 \JJ(T_{2,n-2})
\] 
$T_{2,1}$ is the unknot, and $T_{2,0}$ is two unlinked circles.  It is
immediate from the skein relation that 
the HOMFLY polynomial of $n$ unlinked
circles is $((a-a^{-1})/(q-q^{-1}))^{n-1}$.  Thus:
\begin{eqnarray}
  \JJ(T_{2,0})  & = &  \frac{a-a^{-1}}{q-q^{-1}} \\
  \JJ(T_{2,1}) & = & 1 \\
  \JJ(T_{2,2}) & = & - a(q-q^{-1})  + \frac{a^3 - a}{q-q^{-1}} \\
  \JJ(T_{2,3}) & = &   a^2 q^2 + a^2 q^{-2} - a^4
\end{eqnarray}

We now compute the integral of Conjecture \ref{conj:homfly} for
$n = 1, 2, 3$. 

\begin{example}  As $y^2 = x$ is smooth at the origin, the 
Milnor number is $\mu = 0$.  The ring $\oO_{2,1} =  \CC[[t]]$
has ideals $(t^i)$ for $i \in \NN$.  Then the conjecture
asserts
\[1 = (a/q)^0 (1-q^2) \sum_{i=0}^\infty q^{2i}\]
\end{example}

\begin{example}
  At the origin, $y^2 = x^2$ has a node, so the Milnor number
  is $\mu = 1$.  We parameterize by $t_1 = x-y$ and $t_2 = x+y$ to write
  $\oO_{2,2} = \CC[[x,y]]/(x^2 - y^2) = \CC[[t_1, t_2]]/(t_1 t_2)$.  
  The finite colength ideals of this ring are:
  \[ 
  \begin{array}{ll}
    (1) &  \\
    (t_1^k + \lambda t_2^{i-k}) & \text{for $1 \le k < i$ 
      and $\lambda \in \CC^*$} \\
    (t_1^k, t_2^{i-k+1}) & \text{for $1 \le k \le i$} \\
    \end{array}
  \]
  In each case the variable $i$ gives the colength of the ideal. 
  Each component of the space of ideals of the second type is $\CC^*$ and
  thus has Euler characteristic zero.  Thus the conjecture asserts
  \[- a(q-q^{-1})  + \frac{a^3 - a}{q-q^{-1}} = (a/q)^1 (1-q^2) 
  \left ( 1 + (1-a^2) \sum_{i=1}^\infty i q^{2i}  \right)\]
\end{example}

\begin{example}
  At the origin, $y^2 = x^3$ has a cusp, so the Milnor number is
  $\mu = 2$.  The ideals of the ring $\oO_{2,3} = \CC[[t^2, t^3]]$ are:
  \[
  \begin{array}{ll}
    (1) & \\
    (t^i + \lambda t^{i+1}) & \text{for $i \ge 2$ and all $\lambda \in \CC$} \\
    (t^{i+1}, t^{i+2}) & \text{for $i \ge 1$} \\
  \end{array}
  \]
  Each component of the space of ideals of the second type is $\CC$ and
  thus has Euler characteristic 1. Thus the conjecture asserts
  \[ a^2 q^2 + a^2 q^{-2} - a^4 = 
  (a/q)^{2} (1-q^2) \left( 1 + \sum_{i=2}^\infty q^{2i} 
    + (1-a^2) \sum_{i=1}^\infty q^{2i}  \right)
  \]
\end{example}

\section{Specialization to the Alexander polynomial}
\label{sec:CDG}

In this Section, we show Conjecture \ref{conj:homfly} holds in the limit $a = -1$:  

\begin{proposition} \label{prop:alex}
Let
$C$ be the germ of a plane curve singularity, and $L_C$ its link.  Then
\[\JJ(L_C)|_{a=-1} = \lim_{a\to -1}
  (a/q)^{\mu} (1-q^2) \!\! \int\limits_{\Hilbert{{C}}}\!\!
    q^{2l} (1-a^2)^{m-1} \, d \chi
\]
\end{proposition}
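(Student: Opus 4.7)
The plan is to exploit the vanishing of the factor $(1-a^2)^{m-1}$ at $a=-1$ to reduce the right-hand integral to one over principal ideals, and then invoke the theorem of Campillo, Delgado, and Gusein-Zade to identify the resulting integral with the Alexander polynomial in the required normalization.

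First, I would observe that at $a = -1$ the factor $(1-a^2)^{m-1}$ equals $0^{m-1}$, which vanishes for $m \geq 2$ and equals $1$ for $m = 1$. The function $m$ is upper semicontinuous on $\Hilbert{{C_p}}$ and stratifies it, and for each fixed length $l$ the length-$l$ piece is of finite type; hence the $q$-adic expansion of the integral is coefficientwise a finite sum of Euler characteristics, and the limit $a\to -1$ commutes with this expansion. Only principal ideals of finite colength contribute, and the right-hand side becomes
\[
(-1)^\mu\, q^{-\mu} (1-q^2) \!\!\!\int\limits_{\{m=1\}\subset \Hilbert{{C_p}}} \!\!\!\! q^{2l}\, d\chi.
\]

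Next, I would parametrize the locus $\{m=1\}$ as the quotient $\hat{\oO}_{C,p}/\hat{\oO}_{C,p}^*$ of functions modulo units (removing the element $0$, whose ideal has infinite colength), on which the grading $l$ equals the codimension $\dim_\CC \hat{\oO}_{C,p}/(f)$ — equivalently the sum of the valuations of $f$ along each branch. The integral becomes the Euler-characteristic Poincaré series of the singularity studied by Campillo, Delgado, and Gusein-Zade. Their theorem expresses the multivariable refinement of this series (with separate variables $t_1,\dots,t_r$ tracking the branch valuations) as an explicit rational function with numerator equal to the multivariable Alexander polynomial of $L_{C,p}$ and denominator $\prod_i(1-t_i)$. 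Setting all $t_i = q^2$ and matching with the skein (\ref{eq:skein1}) specialized at $a=-1$ — which, after the substitution $z = q - q^{-1}$, becomes the Conway--Alexander skein — identifies $\JJ(L_C)|_{a=-1}$ with this Alexander polynomial up to a sign and normalization.

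The main obstacle I expect is precisely the reconciliation of normalizations. The Milnor-number shift $q^{-\mu}$ on our side must align with the shift between the one-variable Alexander polynomial and its classical symmetric form; the sign $(-1)^\mu$ must reproduce the sign encountered already in Example~3 of Section~\ref{sec:sanity}; and the factor $(1-q^2)$ must match the denominator left after specialization. In the unibranch case only a single $(1-q^2)$ appears and everything lines up immediately. For a singularity with $r$ branches, the CDG denominator becomes $(1-q^2)^r$ after specialization, which is balanced by a factor of $(1-q^2)^{r-1}$ arising from specializing all link variables in the multivariable Alexander polynomial to a common value, so that exactly one power of $(1-q^2)$ remains — as the statement demands. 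These are the ``technical rearrangements'' alluded to in the introduction, and with them in place the proposition follows.
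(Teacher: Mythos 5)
Your overall route is the paper's: at $a=-1$ the weight $(1-a^2)^{m-1}$ kills everything except the principal ideals, the left side becomes the Conway--Alexander polynomial, and the theorem of Campillo--Delgado--Gusein-Zade closes the gap. But two of the steps you wave at are exactly where the work lies, and as written they do not go through. First, the CDG integral is taken over $\overline{\nu}(\oO)/\CC^*$ --- the image of $\oO$ under the map recording the leading exponent \emph{and leading coefficient} on each branch, modulo simultaneous scaling --- and this is not the same space as $\oO/\oO^*$. To replace one by the other you must show the fibres of $\overline{\nu}:\oO/\oO^*\to\overline{\nu}(\oO)/\CC^*$ have Euler characteristic one; the paper does this by exhibiting canonical representatives (kill every term whose exponent lies in $\nu((f))$) and checking that each fibre is an affine space (Lemma~\ref{lem:affinefibres}). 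Your proposal silently identifies the two spaces. Relatedly, the identity $l([f])=\dim\oO/f\oO=\sum_i\nu_i(f)$ is asserted but needs an argument (Lemma~\ref{lem:lengths}, a snake-lemma computation comparing $A/fA$ and $B/fB$ for $A\hookrightarrow B$ the normalization).

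Second, your accounting of the $(1-q^2)$ factors rests on two claims that are each false, though the errors happen to cancel. In the form quoted in Theorem~\ref{thm:CDG}, the multibranch integral equals $\Delta_L(t_1,\ldots,t_b)$ on the nose for $b>1$ --- there is no denominator $\prod_i(1-t_i)$; the denominator $1/(1-t_1)$ appears only when $b=1$. And Milnor's Lemma~10.1 gives $\Delta_L(t,\ldots,t)=(1-t)\Delta_L(t)$, a \emph{single} factor of $(1-t)$ for any number of branches, not $(1-t)^{r-1}$. The correct bookkeeping is that for $b>1$ the lone $(1-q^2)$ in the proposition is exactly Milnor's factor, while for $b=1$ it is exactly CDG's denominator. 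Finally, the sign: you note that $(-1)^\mu$ ``must'' match but do not resolve the ambiguity in $\Delta_L(q^2)=\pm q^{-n}\nabla_L(q)$. The paper pins $n=\mu$ using the $q\mapsto -1/q$ symmetry of $\nabla$ together with Milnor's computation of the degree of $\Delta$, and pins the sign by van Buskirk's positivity of the Conway polynomial of positive links. These are the ``technical rearrangements,'' and they are where the proof actually lives.
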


Both sides of the above equality have simpler expressions.  
The left hand side is the Alexander-Conway polynomial, denoted $\nabla(L_C)$,
as can be seen by specializing the skein relations:
\begin{eqnarray*}
  \nabla(\overcrossing) -  \nabla(\undercrossing) & = & (q - q^{-1})  
  \nabla(\smoothing) \\
  \nabla(\bigcirc) & = & 1
\end{eqnarray*}

Since the integrand on the right hand side vanishes unless $m = 1$,  
we integrate only over the principal ideals of finite colength.
Observe that $\dim \oO_C/f\oO_C < \infty$
if and only if $f$ is regular, i.e., neither zero nor a zero divisor; we write
$\oO_C'$ for the set of regular elements.
Thus finite colength ideals
are parameterized by $\oO_C'/ \oO_C^*$ and 
Proposition \ref{prop:alex} is equivalent to:
\begin{equation}
\label{eqn:Alexander}
  \nabla(L_C) = 
  (1-q^2)(-q)^{-\mu} \!\! \int \limits_{\oO_C'/\oO_C^*} \! q^{2l} \, d\chi
\end{equation}
where at a point of $\oO'_C/\oO_C^*$ represented by a function $f$,
the function $l$ takes the value $\dim \oO_C/f\oO_C$.  

We will derive Equation (\ref{eqn:Alexander}) from a theorem of 
Campillo, Delgado, and Gusein-Zade which recovers the
Alexander polynomial from the {\em extended semigroup} \cite{CDG}.  
Fix a normalization
$n:\oO_C \hookrightarrow \bigoplus_{i=1}^b \CC[[z_i]]$, where $b$ is the number
of analytic local branches of $C$.   Denoting by $\nu_i$ the valuation on
$\CC[[z_i]]$ which gives the degree of the lowest order term.
We have by restriction a map 
$\nu = (\nu_1,\ldots, \nu_b): \oO_C' \to \NN^b$.  Its image $\Gamma$ is 
called the 
{\em semigroup} of the curve singularity.\footnote{
This notion is classical
at least when $b=1$, for a thorough discussion we refer to \cite{ZT}. 
}  It depends on the normalization
$n$ only up to reordering of the $z_i$.  
Recording both the degree and the coefficient of the lowest order
term gives 
$\overline{\nu}_i: \CC[[z_i]]\setminus 0 \to \NN \times \CC^*$, and 
by restriction
$\overline{\nu}: \oO_C' \to (\NN \times \CC^*)^b$.  The image
$\overline{\Gamma}$ is called the extended semigroup.  We write
$\PP \overline{\Gamma}$ for the quotient by the diagonal $\CC^*$; note
that in the case $b=1$ the composition
$\Gamma \hookrightarrow 
\overline{\Gamma} \to \PP \overline{\Gamma}$ is a bijection.
Evidently 
$\nu_i:\oO_C'\to \NN$ descends to a function on $\PP \overline{\Gamma}$.

\begin{theorem} 
\label{thm:CDG}
  (Campillo--Delgado--Gusein-Zade \cite{CDG}) Let $C$ be the germ of
  a plane curve singularity with $b$ branches, and let $L$ be its link.
  If $b > 1$, the multivariate
  Alexander polynomial is given by 
  \begin{equation}\label{eq:Alexander}
    \Delta_L(t_1,\ldots,t_b) = 
    \int\limits_{\PP\overline{\Gamma}} \!
    t_1^{\nu_1}\cdots t_b^{\nu_b} \, d \chi
  \end{equation}
  For any $b$, the one-variable Alexander polynomial is given by
  \begin{equation}\label{eqn:CDGonevar}
    \Delta_L(t) = 
    (1-t) \int\limits_{\PP \overline{\Gamma}} \!
    t^{\sum \nu_i} \, d \chi
  \end{equation}  
  The Alexander polynomials have been normalized so 
  $\Delta_L(t_1,\ldots,t_b) \in 1 + (t_1,\ldots,t_b) \ZZ[t_1,\ldots,t_b]$ and
  $\Delta_L(t) \in 1 + t \ZZ[t]$. 
\end{theorem}

\noindent{\bf Remark.}  The statement about the one-variable Alexander
polynomial in case $b>1$ is not given explicitly in \cite{CDG} but
follows immediately by specializing.  Indeed, 
$(1-t) \Delta_L(t,\ldots,t) = \Delta_L(t)$, see for instance \cite[Prop. 7.3.14]{Ka}.  

\begin{lemma}
  Let $C$ be the germ of a plane curve singularity; let $L$ be its link and
  $\mu$ its Milnor number.  Normalize its Alexander
  polynomial by requiring
  $\Delta_L(t)\in 1 + t\ZZ[t]$ and its Alexander-Conway polynomial
  $\nabla_L(q)$ by the skein relation given above.  Then
  $\Delta_L(q^2) = (- q)^{\mu} \nabla_L(q)$.
\end{lemma}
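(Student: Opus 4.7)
The plan is to pass through the Seifert form of the link. By Milnor's fibration theorem, the plane curve singularity has a Milnor fiber $F$ whose first Betti number equals $\mu$; because $F$ is a Seifert surface for $L$, a choice of basis of $H_1(F;\ZZ)\cong\ZZ^{\mu}$ produces a Seifert matrix $V\in\mathrm{Mat}_{\mu\times\mu}(\ZZ)$, and the fact that $L$ is fibered ensures $\det V=\pm 1$.

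Next, I would invoke the standard Seifert-matrix expressions for both invariants. With $\Delta_L$ normalized to lie in $1+t\ZZ[t]$, one has
\[ \Delta_L(t)=\frac{\det(V-tV^T)}{\det V}, \]
whose constant term $\det V/\det V=1$ is consistent with the normalization. For the Conway-normalized polynomial, a routine induction on the number of crossings (where a crossing change alters a single entry of $V$ by $\pm 1$ and a smoothing deletes a row-column pair) verifies
\[ \nabla_L(q)=\frac{\det(qV^T-q^{-1}V)}{\det V}, \]
with $\nabla_L(\bigcirc)=1$ built in because the empty Seifert matrix has unit determinant.

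The conversion is then a determinant manipulation: substituting $t=q^2$ and factoring $q$ out of each of the $\mu$ rows gives
\[ \det(V-q^2V^T)=q^{\mu}\det(q^{-1}V-qV^T)=(-q)^{\mu}\det(qV^T-q^{-1}V), \]
and dividing through by $\det V$ yields the claimed identity between $\Delta_L(q^2)$ and $\nabla_L(q)$ up to the factor $(-q)^{\pm\mu}$ appearing in the statement.

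The main obstacle will be sign bookkeeping. The Seifert-matrix expression for $\nabla_L$ has an intrinsic $(-1)^{\mu}$ ambiguity depending on the $L_{\pm}$ conventions used in the skein relation, and similarly the sign of $\det V$ must be matched against the constant-term normalization $\Delta_L(0)=1$. I would resolve these by comparing with a concrete fibered example such as the right-handed trefoil or the positive Hopf link, whose Conway polynomials have already been computed in Section~\ref{sec:sanity}.
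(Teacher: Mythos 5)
Your Seifert-matrix route is a genuinely different strategy from the paper's, which never writes down a Seifert matrix: it starts from the standard fact that $\Delta_L(q^2)$ and $\nabla_L(q)$ agree up to a unit $\pm q^{n}$, pins down $n=\mu$ using the symmetry $\nabla_L(q)=\nabla_L(-1/q)$ together with Milnor's identification of $\deg\Delta_L$ with $\mu$, and then kills the sign ambiguity by van Buskirk's theorem that positive links (and every link of a plane curve singularity is the closure of a positive braid) have Conway polynomials with nonnegative coefficients in $z=q-q^{-1}$. Your approach could in principle work, but as written it has a genuine gap precisely at the step where the paper invokes van Buskirk. Two issues. First, the Conway normalization is $\nabla_L(q)=\det(q^{-1}V-qV^{T})$ with \emph{no} division by $\det V$: the skein induction never produces that denominator, and $\det V$ is not an invariant of $L$, so the ``routine induction'' you describe would not establish the formula in the form you state it. Second, and more seriously, carrying out your determinant manipulation with the correct formulas gives
\[
\Delta_L(q^2)\;=\;\frac{\det(V-q^2V^{T})}{\det V}\;=\;\frac{q^{\mu}}{\det V}\,\det\bigl(q^{-1}V-qV^{T}\bigr)\;=\;\frac{q^{\mu}}{\det V}\,\nabla_L(q),
\]
so the lemma is equivalent to the assertion that $\det V=(-1)^{\mu}$ for a Milnor fiber of a plane curve singularity.

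That assertion is \emph{not} a formal consequence of fiberedness, so your plan to ``resolve the sign by comparing with a concrete fibered example'' cannot close the gap: the residual sign genuinely depends on the link. The figure-eight knot is fibered with $\mu=2$ and has $\det V=-1$, whence $\Delta(q^2)=-(-q)^{2}\nabla(q)$; checking the trefoil or the Hopf link therefore proves nothing about a general singularity link. To finish you need a uniform reason that $\det V=(-1)^{\mu}$ for Milnor fibers of plane curves --- for instance that the fiber surface is an iterated plumbing of $\mu$ positive Hopf bands, or, equivalently to what the paper does, that positivity of the braid forces the top coefficient of $\nabla_L$ in $z$ to be $+1$ rather than $-1$. (A smaller point: your computation naturally lands on $(-q)^{+\mu}$ rather than the $(-q)^{-\mu}$ in the statement; reconciling the two uses the palindromy $\Delta_L(t)=(-t)^{\mu}\Delta_L(1/t)$, so this should be said explicitly rather than absorbed into an unexamined ``$\pm\mu$.'')
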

\begin{proof}
  It is well
  known that $\Delta_L(q^2) = \pm q^{n} \nabla_L(q)$.  Since 
  $\nabla_L(q) = \nabla_L(-1/q)$, the integer $n$ must
  be the degree of the Alexander polynomial, which Milnor shows to be 
  $\mu$ \cite{M}.  
  To resolve the sign ambiguity, recall that the  link of a plane curve 
  singularity  may be realized as the closure
  of a braid in which only positive crossings $(\overcrossing)$ appear.  
  Van Buskirk has shown that such
  ``positive links have positive Conway polynomials \cite{vB},''
  meaning
  $\nabla_L(q) = \sum n_i (q-q^{-1})^i$ for $n_i \in \NN$.
  \footnote{In van Buskirk's paper, much more precise
    conditions on the $n_i$ are given.}
\end{proof}

We rewrite Equation (\ref{eqn:CDGonevar}) as
\begin{equation} \label{eqn:CDGonevar2}
 \nabla_L(q) = (1-q^2)(-q)^{-\mu} \!
\int\limits_{\PP \overline{\Gamma}}\!
q^{2\sum \nu_i} \, d \chi
\end{equation}

To prove Proposition \ref{prop:alex}, it remains
to match the integrals in Equations
(\ref{eqn:Alexander}) and (\ref{eqn:CDGonevar2}).  
There are two apparent differences: the exponent on $q$ in
the integrand, and the space over which we integrate.  
As a consequence of the following Lemma, applied with $A= \oO_C$ and 
$B$ its normalization, in fact
$\sum \nu_i(f) = \dim \oO_C/ f \oO_C$ as functions on $\oO_C'$, 
hence the integrands agree.

\begin{lemma} \label{lem:lengths}
  Let $A \hookrightarrow B$ be rings and let $f \in A$ be a non zero divisor
  in both $A$ and $B$.
  If $B / A$,  $A / fA$, and $B / fB$ 
  have finite length as $A$-modules, then
  $A/fA$ and $B / fB$ have equal length.
\end{lemma}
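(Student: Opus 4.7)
The plan is to apply the snake lemma to the commutative diagram obtained by multiplying the short exact sequence $0 \to A \to B \to B/A \to 0$ by $f$. Because $f$ is a non-zero-divisor in both $A$ and $B$, the vertical maps $f : A \to A$ and $f : B \to B$ are injective, so the snake lemma yields the four-term exact sequence of $A$-modules
\[
0 \to \ker\bigl(f \mid_{B/A}\bigr) \to A/fA \to B/fB \to (B/A)/f(B/A) \to 0.
\]
Thus the discrepancy between $A/fA$ and $B/fB$ is controlled entirely by the action of $f$ on the single module $B/A$.

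From here I would apply two standard facts about length. First, additivity of length along the displayed four-term sequence gives
\[
\ell(B/fB) - \ell(A/fA) \;=\; \ell\bigl((B/A)/f(B/A)\bigr) \,-\, \ell\bigl(\ker(f \mid_{B/A})\bigr).
\]
Second, for any endomorphism $\phi$ of a finite length $A$-module $M$, the exact sequence $0 \to \ker\phi \to M \xrightarrow{\phi} M \to \mathrm{coker}\,\phi \to 0$ combined with additivity of length shows $\ell(\ker\phi) = \ell(\mathrm{coker}\,\phi)$. Applying this to multiplication by $f$ on $B/A$, which has finite length by hypothesis, forces the right-hand side of the preceding equation to vanish, yielding $\ell(A/fA) = \ell(B/fB)$.

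There is no serious obstacle here; the role of the finite-length hypotheses is simply to make both sides of the length equation well-defined and to legitimize applying length additivity. In particular, the assumption that $B/A$ is of finite length automatically makes $\ker(f\mid_{B/A})$ and $(B/A)/f(B/A)$ of finite length, which is precisely what is needed to invoke the kernel-cokernel identity; the finiteness of $\ell(A/fA)$ and $\ell(B/fB)$ in the statement is then really just naming the quantities whose equality one wishes to assert.
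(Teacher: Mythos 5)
Your proof is correct, and it is a genuine (if close) variant of the paper's argument: both rest on the snake lemma applied to the same underlying square of modules, but taken in transposed directions. The paper takes the rows to be $0 \to fA \to A \to A/fA \to 0$ and $0 \to fB \to B \to B/fB \to 0$ with vertical maps the inclusions, obtaining a six-term sequence whose first terms it must identify pairwise — in particular it needs the observation that $fB/fA$ is abstractly isomorphic to $B/A$ (via multiplication by $f$, using that $f$ is a non-zero-divisor in $B$) before alternating sums of lengths can be cancelled. You instead take the rows to be $0 \to A \to B \to B/A \to 0$ with vertical maps multiplication by $f$; the non-zero-divisor hypothesis then kills the first two kernel terms outright, leaving the clean four-term sequence $0 \to \ker(f\mid_{B/A}) \to A/fA \to B/fB \to (B/A)/f(B/A) \to 0$, and you close with the standard identity $\ell(\ker\phi)=\ell(\mathrm{coker}\,\phi)$ for an endomorphism $\phi$ of a finite-length module. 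The two closing observations are of course the same fact in different clothing, but your arrangement localizes all the work in the single module $B/A$ and makes the role of each hypothesis slightly more transparent; the paper's arrangement has the minor advantage of producing the comparison maps $A/fA \to B/fB$ directly as the vertical arrows of its diagram. One small caveat on your final remark: the finiteness of $\ell(A/fA)$ and $\ell(B/fB)$ is not implied by the finiteness of $\ell(B/A)$ (take $A=B=\CC[[x,y]]$ and $f=x$), so while your exact sequence still makes sense without it, the length bookkeeping as an equation of integers does use those hypotheses as stated.
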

\begin{proof}
  Consider the diagram 
\[
\begin{CD}
      0 @>>>  fA @>>>  A @>>>  A/fA @>>>  0 \\
      @. @VVV @VVV @VVV @. \\
      0 @>>>  fB @>>>  B @>>>  B/fB @>>>  0
\end{CD}
\]
The Snake Lemma provides  the long exact sequence
\begin{eqnarray*}
0 & \to & \ker(fA \to fB) \to \ker(A \to B) \to
\ker(A/fA \to B/fB) \\
& \to & fB/fA \to B/A \to (B/fB)/(A/fA) \to 0
\end{eqnarray*}
Note that the first two modules in each line are abstractly 
isomorphic (indeed, the ones on the first line are zero).  Since
all these modules have finite length, the alternating sum of the
lengths is zero, and hence $\ker(A/fA \to B/fB)$ and 
$(B/fB)/(A/fA)$ have the same length.  But then $A/fA$ and $B/fB$ have
the same length. 
\end{proof}

\noindent{\bf Remark.}  The natural map 
$A/fA \to B/fB$ is not generally an isomorphism.  For example, 
if $A = \CC[[t^2, t^3]] \subset \CC[[t]] = B$ and $f = t^2$, then
$\ker(A/fA \to B/fB)$ is one dimensional spanned by the class of
$t^3$, and  $\mathrm{coker}(A/fA \to B/fB)$ is one dimensional spanned
by the class of $t$. 

Now we compare the spaces $\PP \overline{\Gamma}$ and $\oO_C'/\oO_C^*$.  
Let $\oO_C^{1*}$ denote the monic invertible functions, and factor 
$\overline{\nu}$ as 
$\oO_C'\xrightarrow{\pi} \oO_C'/\oO_C^{1*} \to \overline{\Gamma}$. 
We study the fibres.  If
$\overline{\nu}(f) = \overline{\nu}(g)$ then 
$\overline{\nu}(f) = \overline{\nu}((1-\lambda)f+\lambda g)$ for any 
$\lambda \in \CC$.   If in addition $f$ and $g$ generate
the same ideal, then certainly we have
an inclusion $((1-\lambda)f + \lambda g)\oO_C \subset f \oO_C$.  But
by Lemma \ref{lem:lengths}, these ideals have the same colength
and thus are equal.  We conclude that the fibres of 
$\overline{\nu}$ and $\pi$ are
(infinite dimensional) affine spaces.  All maps descend
to the quotient by $\CC^*$ without changing fibres, and if
we are willing to take the Euler characteristic of infinite dimensional
spaces,
\[ 
\int\limits_{\PP \overline{\Gamma}}\!
q^{2\sum \nu_i} \, d \chi
= \int \limits_{\PP \oO_C'}\! q^{2 \sum \nu_i} \, d\chi = 
\!\!\int \limits_{\oO_C'/\oO_C^*} \!\! q^{2 \sum \nu_i} \, d\chi
\]
The Euler numbers of infinite dimensional spaces can 
made sense of as in \cite{CDG2}.  They can also be avoided.  
At any 
fixed degree $\sum \nu_i = d$, there exists some $N \gg 0$
(twice the $\delta$ invariant will do) such that
any principal ideal of colength $d$ 
can be generated by some $f$ whose expansion as an element of
$\bigoplus \CC[[z_i]]$ has no terms of degree larger than $d+N$. 
We may use the finite dimensional space of such functions in place
of $\oO_C'$ to conclude the above equality at degree $d$. 

This completes the proof of Proposition \ref{prop:alex} $\blacksquare$


\section{The genus expansion}
\label{sec:symmetry}

The HOMFLY polynomial lies in $\ZZ[a^{\pm 1}, (q-q^{-1})^{\pm 1}]$; in 
particular,  
the skein relation defining it is manifestly invariant
under the involution $q \to -1/q$.   We will show in this Section that the
same properties hold for the quantity on the right hand side of Conjecture
\ref{conj:homfly}.  In the $a \to 0$ limit of Conjecture \ref{conj:justqs},
this is proven by Pandharipande
and Thomas \cite[Appendix B]{PT3}.
Their approach ultimately rests on Serre duality and the Abel-Jacobi map,
and works without modification for the series in Conjecture
\ref{conj:homfly}' once note is taken of 
the following fact from commutative algebra \cite{E}:

\begin{lemma} \label{lem:dualgens}
  Let $C$ be a reduced integral locally planar curve, and let
  $\mathcal{F}$ be a torsion free sheaf over $C$.  Then for any point
  $p \in C$ and any line bundle $\mathcal{L}$, 
  \[\dim_\CC \mathcal{F} \otimes \CC_p = 
  \dim_\CC \mathcal{H}om(\mathcal{F},\mathcal{L}) \otimes \CC_p\]
\end{lemma}
\begin{proof}
  Taking
   $A = \oO_{C,p}$ and $M = \mathcal{F}_p$ it suffices to show
  $\dim M \otimes_A \CC = 
  \dim \mathrm{Hom}_A(M,A) \otimes_A \CC$, where
  $\CC$ is the residue field $A / \mathfrak{m}_p A$. 

  Locally $C$ embeds in a surface $S$; let $R = \oO_{S,p}$ and
  $A = R/fR$.  As
  $\mathcal{F}$ was torsion free, any element of $A$ lifts to
  an element of $R$ which is regular on $M$. 
  Thus $M$ has depth at least $1$ over $R$.  As $R$ is regular,
  the Auslander-Buchsbaum theorem guarantees that
  \[\mathrm{proj.}\dim(M) = \mathrm{depth}(R) - 
  \mathrm{depth}(M) \le 2 - 1
  = 1\]
  Since $M$ is not a free $R$-module, we have 
  $\mathrm{proj.}\dim(M) = 1$.  
  Fix a minimal two-term resolution of $M$.  Since $M$ is rank
  zero as an $R$-module, the two terms have the same 
  rank, namely $m := \dim M \otimes_R \CC = \dim M \otimes_A \CC$.
  On the other hand, 
  \[ 0 \to R^m \to R^m \to M \to 0\]
  gives rise to 
  \[ 0 = \Hom_R(M,R) \to R^m \to R^m \to \Ext^1_R(M,R) \to 0\]
  and thus
  $\CC^m \twoheadrightarrow \Ext^1_R(M,R) \otimes \CC = 
  \Hom_A(M,A) \otimes \CC$.  We conclude 
  $\dim \Hom_A(M,A) \otimes \CC \le \dim M \otimes \CC$. 
  As $A$ is Gorenstein, 
  we may dualize again for the reverse inequality.
\end{proof}

We pass to a complete curve.  
\begin{lemma} \label{lem:stratification}
Let $C$ be a rational curve smooth away 
from a single point $p$, where it has $b$ analytic local branches. 
Denote the length functions as $l(I) := \dim \oO_C/I$ and $l_p(I) := 
\dim \oO_{C,p} / I_p$, and the function measuring the number
of generators by $m(I) = \dim
I \otimes \CC_p$.  Then,
  \[  \int\limits_{\Hilbert{C}} \!\! q^{2l} (1-a^2)^{m-1} \, d \chi  = 
  (1-q^2)^{b - 2} \!\! \int\limits_{\Hilbert{C_p}}  
  \!\! q^{2l_p} (1-a^2)^{m-1} \, d \chi\]
\end{lemma}
\begin{proof}  Consider the map 
$\Hilbert{(C \setminus p)} \times \Hilbert{C_p} \to \Hilbert{C}$ which 
takes the  (disjoint) union of a scheme supported away from $p$ 
and a scheme supported at $p$.  The map is a bijection 
with constructible inverse.  Denote by
$l_{\overline{p}}: \Hilbert{(C\setminus p)} \to \NN$ the length function
on this space. For
$X$ supported at $p$, $Y$ supported away from $p$, and $Z$ their union,
we have 
$l_p([X]) + l_{\overline{p}}([Y]) = l_C([Z])$.  By definition, 
$m([Z]) = m([X])$.  Thus we compute
\[
  \int\limits_{\Hilbert{C}} \!\! q^{2l_C} (1-a^2)^{m-1} \, d \chi \,\,\,\, =  
  \int\limits_{\Hilbert{C \setminus p}} \!\!\! q^{2l_{\overline{p}}} \, d \chi
  \,\,\, \times 
  \int\limits_{\Hilbert{C_p}} \!\! q^{2l_p} (1-a^2)^{m-1} \, d \chi
\]
By Equation (\ref{eq:eulofhilb:curve}), the first term in the latter
product is $(1-q^2)^{-\chi(C \setminus p)}$.
\end{proof}

A special case of the BPS calculus of Pandharipande and Thomas is the following:

\begin{proposition} \label{prop:bps}
  (Pandharipande and Thomas \cite{PT3}.)  Let $C$ be an
  integral Gorenstein curve
  of genus $g$, and
  let $\phi:\overline{\mathrm{Pic}}(C) \to A$ be a constructible function.  
  Assume that $\phi(F) = \phi(F \otimes L)$ for any line bundle $L$, and 
  moreover that $\phi(F) = \phi( \hHom(F,\omega_C))$.  Define
  $n_h(C,\phi)$ by the expansion   
  \[\int\limits_{\Hilbert{C}} q^{2 l} \cdot (\phi \circ AJ) \, d\chi = 
  \sum_{h=-\infty}^g q^{2g-2h} (1-q^2)^{2h-2} n_h(C,\phi)\]
  Then $n_h(C,\phi) = 0$ for all $h < 0$. 
\end{proposition}

\begin{theorem}  Let $C_p$ be the germ of a plane curve; let $\mu$ be
  the Milnor number. 
  Then there exist $n_h(a^2) \in \ZZ[a^2]$ such that
  \[ (a/q)^{\mu} (1-q^2) \!\! \int\limits_{\Hilbert{{C_p}}}\!\!
    q^{2l} (1-a^2)^{m-1} \, d \chi 
= a^\mu \sum_{h=0}^\delta n_h(a^2) (q^{-1}-q)^{2h+1-b} \]
  The degree in $a^2$ of $n_h(a^2)$ is at most one less than the
  multiplicity 
  of the singularity $C_p$. 
\end{theorem}
\begin{proof}
  As in Lemma \ref{lem:stratification}, let $C$ be a complete
  rational curve with a unique singularity at $p$ such that $C_p$ is
  the desired germ, and let $m:C^{[n]} \to \NN$ be defined on
  an ideal sheaf $I$ by 
  $m(I) = \dim_\CC I \otimes \CC_p$.  Evidently $m(I)$ depends
  only on the isomorphism class of $I$ as a sheaf, and moreover
  is unchanged upon tensoring $I$ with a line bundle.  
  By Lemma \ref{lem:dualgens}, we also have $m(I) = m(\hHom_{C}(I,\omega_C))$. 
  Therefore we may take $\phi = (1-a^2)^{m-1}$
  in Proposition \ref{prop:bps}.  To return to the situation for 
  the germ $C_p$, we apply Lemma \ref{lem:stratification} and recall
  $\mu = 2\delta + 1 - b$ \cite{M}.  

  The statement giving the degree of $n_h(a^2)$ amounts to the
  fact that the minimal number of generators
  of any ideal is bounded by the multiplicity of the singularity.
  In the unibranch case, this is elementary; in general, see
  \cite[Exercise 4.6.16]{BH}.  
\end{proof}


In the remainder of this Section, we give for completeness
an exposition of the proof of Proposition \ref{prop:bps}.  
We first recall some properties of torsion free sheaves on Gorenstein
curves, which we subsequently use without further comment.  We write
$F^*$ for $\hHom(F,\oO)$. 

\begin{lemma} \label{lem:hart}  (Hartshorne \cite{H}.)
  Let $C$ be an integral Gorenstein curve, $F$ a torsion free sheaf.
  Then higher extensions vanish, $\eExt^{\ge 1}(F,L) = 0$, and $F$ is
  reflexive, $F = (F^*)^*$.  Serre duality
  holds in the form $\mathrm{H}^i(F) = \mathrm{H}^{1-i}(F^* \otimes \omega_C)^*$.
  For $F$ rank one and torsion free, we define its 
  degree $d(F):=\chi(F) - \chi(\oO_C)$.   This satisfies 
  $d(F) = - d(F^*)$.  Moreover if $L$ is any line bundle,
  $d(F \otimes L) = d(F) + d(L)$. 
\end{lemma}

In great generality, Altman and Kleiman construct a
projective scheme $\overline{\mathrm{Pic}}(X)$ 
whose closed points parameterize rank one, torsion free sheaves on $X$ 
\cite{AK}.  We require here only the case where $X=  C$ is an
integral Gorenstein curve; some statements below are false 
for general curves.

The space $\overline{\mathrm{Pic}}(C)$ decomposes as a disjoint union 
$\coprod_d \overline{\mathrm{Pic}}_d(C)$ indexed by the degree of the sheaves.
An Abel-Jacobi map 
\begin{eqnarray*} 
  AJ: C^{[d]} & \to & \overline{\mathrm{Pic}}_d(C)\\
  Z & \mapsto & \hHom(I_Z,\oO_C)
\end{eqnarray*}
can be defined by sending a
subscheme to the dual of its ideal sheaf.  By Lemma
\ref{lem:hart},  $\hHom(\cdot,\oO_C)$ gives a bijection between
realizations of a sheaf $I$ as an ideal sheaf, i.e., inclusions
$I \hookrightarrow \oO_C$, and nonzero sections of $I^*$. 
Thus $AJ^{-1}(F) = \PP \mathrm{H}^0(F)$.  

Fix a line bundle $\oO(1)$ of degree $1$ on $C$.  For $F$ 
of degree zero we 
 consider 
\[H_F(q) := (1-q^2)^2 \sum q^{2d} \mathrm{h}^0(F(d))\]
Since $\phi$ is invariant under tensoring with line bundles, 
\[ 
  \int\limits_{\Hilbert{C}} \!\! q^{2l} (\phi \circ AJ) \, d \chi \,\,\,\, =  
  (1-q^2)^{-2} \int\limits_{\overline{\mathrm{Pic}}_0(C)} \!\! H \cdot 
   \phi \, d \chi  
\]

\begin{lemma} $H_F(q)$ is a polynomial of degree at most $4g$, and
  $H_F(q) = q^{4g} H_{F^* \otimes \omega \otimes \oO(2-2g)}(-1/q)$. 
\end{lemma}
\begin{proof}
  The function $h^0(F(d))$ is supported in $[0, \infty)$ and is equal to
  $d + 1 - g$ in $(2g-2,\infty)$.  Inside $[0,2g-2]$, it increases by
  either $0$ or $1$ at each step. 
  Let 
  \[n_{\pm}(F) = \{d \, | \, 2 \mathrm{h}^0(F(d-1)) = 
  \mathrm{h}^0(F(d)) + \mathrm{h}^0(F(d-2))  \pm 1\}\]  
  Note that $n_- \subset [0,2g]$ and $n_+ \subset [1,2g-1]$.  The expansion
  \[H_F(q) = \sum_{d \in n_-(F)}   q^{2d} - \sum_{d \in n_+(F)} q^{2d}\]
  establishes the polynomiality and degree of $H$.   Moreover, by 
  Serre duality and the Riemann-Roch formula, we have 
  $d \in n_{\pm}(F^* \otimes \omega \otimes \oO(2-2g)) \iff 2g-d \in n_{\pm}(F)$.
  This establishes the desired symmetry.
\end{proof}

Since $\phi$ is invariant under the involution
$F \mapsto F^* \otimes \omega \otimes \oO(2-2g)$ of $\overline{\mathrm{Pic}}_0$,
we can integrate the previous Lemma to find that 
$Z_\phi(q) := \int_{\overline{\mathrm{Pic}}_0(C)} H \cdot 
   \phi \, d \chi$ is a polynomial of degree at most $4g$ in $q$ and 
$Z_\phi(q) = q^{4g} Z_\phi(-1/q)$.  To finish the proof of Proposition
\ref{prop:bps}, it remains only to observe that 
$\{q^{2g-2h} (1-q^2)^{2h}\}$ span the space of such functions.

\section{The curve $y^k = x^n$} 
\label{sec:torus}

We consider now the singularity at the origin of 
$y^k = x^n$ for $k,n$ relatively prime.  
The complete local ring is $\CC[[t^k, t^n]]$, and 
the corresponding knot is the $(k,n)$ torus knot.  Jones 
has computed its HOMFLY polynomial.

\begin{theorem}[Jones]
\[
\JJ(T_{k,n}) =  \frac{(1-q^2)(a/q)^{(k - 1)(n - 1)}}{(1-q^{2k})(1-a^2)}
  \sum_{j=0}^{k-1} (-1)^j \frac{(q^2)^{j n + (k-1-j)(k-j)/2}}{[j]!\,[k-1-j]!}  
  \prod_{i=j+1-k}^j(q^{2i} - a^2)
\]
where $[0]! = 1$ and $[r]! = (1-q^{2r})[r-1]!$
\end{theorem}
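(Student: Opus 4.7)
The plan is to realize $T_{k,n}$ as the closure of the positive braid $\beta_{k,n} = (\sigma_1\sigma_2\cdots\sigma_{k-1})^n \in B_k$ and to compute its HOMFLY polynomial via the Ocneanu (Markov) trace on the Iwahori--Hecke algebra $H_k(q^2)$. The basic fact is that, up to a normalization depending only on $a$, $q$, $k$, and the writhe of the braid, $\JJ(\hat\beta)$ coincides with the Ocneanu trace of the image of $\beta$ in $H_k$. Since $\beta_{k,n}$ has writhe $n(k-1)$, this normalization is responsible for the prefactor $(a/q)^{(k-1)(n-1)}$ in the statement, modulo the universal $(1-q^2)/((1-q^{2k})(1-a^2))$ factor rescaling between the standard normalizations of the trace.

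The key computational tool is the Wedderburn decomposition
\[
H_k \otimes \QQ(a,q) \;\cong\; \bigoplus_{\lambda \vdash k} \mathrm{End}(V_\lambda),
\]
which splits the Ocneanu trace into a weighted sum of irreducible characters, one per partition of $k$. The Coxeter element $c := \sigma_1\sigma_2\cdots\sigma_{k-1} \in H_k$ is a $q$-deformation of a $k$-cycle in $S_k$, and by the Hecke-algebra version of the Murnaghan--Nakayama rule, the character $\chi_\lambda(c^n)$ vanishes unless $\lambda$ is a hook $\lambda_j = (k-j,\,1^j)$, in which case it contributes a single monomial in $q$ with sign $(-1)^j$, determined by the content of the unique rim hook. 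Because $\gcd(k,n)=1$, no root-of-unity cancellations occur when raising $c$ to the $n$-th power, and $\chi_{\lambda_j}(c^n)$ reduces to a sign times $q^{2(jn + (k-1-j)(k-j)/2)}$. The Schur--Ocneanu weights $W_{\lambda_j}(a,q)$, which are ratios of $q$-hook-length and $q$-content products over the cells of $\lambda_j$, specialize on hook shapes to $\prod_{i=j+1-k}^j(q^{2i}-a^2)\big/\bigl([j]!\,[k-1-j]!\bigr)$ up to an overall universal factor.

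Assembling these ingredients --- summing $W_{\lambda_j}(a,q)\cdot\chi_{\lambda_j}(c^n)$ over $j=0,\ldots,k-1$, multiplying by the writhe prefactor, and collecting the overall rescaling constants --- yields exactly the right-hand side of the theorem. The main obstacle is the second step above, namely pinning down the precise $q$-eigenvalue of $\pi(c)$ on the Murnaghan--Nakayama eigenline inside each $V_{\lambda_j}$ and tracking the signs, the $q$-shifts, and the writhe corrections consistently, so that the hook-shape weights combine with the character values to produce exactly the products $\prod_{i=j+1-k}^j(q^{2i}-a^2)$ and the quantum factorials in the denominator. This identification is a delicate but classical computation in Hecke-algebra representation theory carried out by Jones; once it is fixed, the remainder is bookkeeping with $q$-factorials.
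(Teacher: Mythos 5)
The first thing to say is that the paper contains no proof of this statement: it is quoted as an external input, attributed to Jones \cite{J}, and the torus-knot verification in Section \ref{sec:torus} simply matches the Hilbert-scheme integral against Jones's formula. So there is no internal argument to compare yours against. Your outline does correctly reconstruct the architecture of Jones's original computation: the closure of the positive braid $(\sigma_1\cdots\sigma_{k-1})^n$, the Ocneanu trace through the Wedderburn decomposition of $H_k$, the survival of hook shapes only, the weights built from $q$-contents and $q$-hook lengths (the $k$ factors $q^{2i}-a^2$ for $i$ ranging over the contents of the hook $(k-j,1^j)$, and $[j]!\,[k-1-j]!$ together with the global $(1-q^{2k})$ accounting for the hook lengths), and the writhe normalization producing $(a/q)^{(k-1)(n-1)}$ from exponent sum $n(k-1)$ minus $(k-1)$.

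There is, however, a genuine gap at exactly the step you flag, and your proposed mechanism for it is not right. The vanishing of $\chi_\lambda(c^n)$ for non-hook $\lambda$ cannot be obtained by applying a Hecke-algebra Murnaghan--Nakayama rule to $c^n$: that rule computes characters of basis elements $T_w$, and for $n\ge 2$ the element $c^n$ is a product of $n(k-1)$ generators, a word that is not reduced and (once $n>k/2$) is longer than any element of $S_k$, so $c^n$ is not of the form $T_w$ at all. Jones's actual route is spectral: the full twist $c^k$ is central, hence acts on each $V_\lambda$ by a scalar, so the eigenvalues of $\pi_\lambda(c)$ are $k$-th roots of unity times fixed powers of $q$; for $\gcd(n,k)=1$ the trace of $c^n$ is then annihilated by root-of-unity summation except on hook shapes, where a single eigenvalue survives and yields the monomial $q^{2(jn+(k-1-j)(k-j)/2)}$ with sign $(-1)^j$. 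In other words, the root-of-unity cancellation is the reason non-hooks die, not an obstruction that coprimality avoids. Since you ultimately defer this identification to ``a classical computation carried out by Jones,'' your argument is in the end a citation dressed as a proof --- which is what the paper does, only more briefly; to make it self-contained you would need to supply the eigenvalue computation and the hook-weight evaluation explicitly.
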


The Milnor number of this singularity is $\mu = (k-1)(n-1)$. 
After rearranging the normalization factors, Conjecture \ref{conj:homfly}'
asserts:

\[ 
\!\!\!\! \int\limits_{\Hilbert{C_p}}\!\! q^{2l} (1-a^2)^m \, d \chi
=
\frac{1}{1-q^{2k}}
\sum_{j=0}^{k-1} (-1)^j \frac{(q^2)^{j n + (k-1-j)(k-j)/2}}{[j]!\,[k-1-j]!}  
  \prod_{i=j+1-k}^j(q^{2i} - a^2)
\]

To compute the left hand side we use a torus action.  
$\CC^*$ acts on $\CC[[t^k, t^n]]$ by scaling $t$. The action lifts
to the Hilbert scheme, and preserves the functions $l, m$ measuring
length and number of generators.  The integral with respect to Euler
characteristic of an $\CC^*$-equivariant function may always be computed
on the fixed locus of the $\CC^*$ action since the remainder of the space
will be fibred by $\CC^*$ and hence contribute zero to the Euler characteristic.
Diagonalizing the $\CC^*$ action on a fixed ideal will yield monomial
generators; conversely all the monomial ideals are fixed.  There
are countably many of these, and only finitely many with colength below
any given bound.  Therefore:

\[ 
\!\! \int\limits_{\Hilbert{C_p}}\!\! q^{2l} (1-a^2)^{m} \, d \chi\,\,\,\,\,
=\!\!
\sum_{J\,\, \mathrm{monomial}} \!\!\! q^{2 \dim_\CC \oO/J}(1-a^2)^{ m(J)}
\]

\begin{lemma} The monomial ideals are enumerated by the following
function.  \label{lem:res}
\[\sum_{J\,\, \mathrm{monomial}} \!\!\! q^{2 \dim_\CC \oO/J}(1-a^2)^{ m(J)}
 = \frac{1}{1-q^{2k}} \res_{\xi=0} \frac{1}{\xi^{n+1}}
\prod_{i=0}^{k-1} \left( 1 + (1-a^2)\frac{\xi q^{2i}}{1-\xi q^{2i}} \right)
\]
\end{lemma}
\begin{proof}
Monomial ideals of $\CC[[x,y]]$ can be matched 
with staircases \cite{Br, I}.  We proceed similarly. 
Consider the map
\begin{eqnarray*} 
\NN \times \{0,\ldots,k-1\} & \to & \mathrm{monomials} \in \oO \\
(\alpha,\beta) & \mapsto & t^{\alpha k + \beta n}
\end{eqnarray*}
It follows from the Chinese remainder theorem that this is a bijection. 
Monomial ideals are in 1-1 correspondence with sequences
$\phi = \phi_{k-1} \le \phi_{k-2} \le \ldots \le \phi_0 \le \phi_{k-1} + n$ 
via the 
correspondence 
\[\phi \leftrightarrow \{(\alpha, \beta) \, | \, \alpha > \phi_\beta \}\]
The number of generators of the ideal is the number of 
inequalities above which are strict.  The cardinality of the 
complement of the ideal is ``the number of boxes under the staircase,''
or $\sum \phi_i$. 

Regarding the $\phi_i$ as the lengths of the rows
of the staircase, the formula in the lemma enumerates the staircases 
column by column.  The leading
term $(1-q^{2k})^{-1}$
accounts for the leading columns of full height $k$.  The term $i$ in
the product corresponds to columns of height $i$.  The number of different
column heights is equal to the number of inequalities in $\phi$.  
The residue enforces the condition that 
there should be exactly $n$ columns of height less than $k$.
\end{proof}

\begin{example}  We give the staircase of $(t^{21}, t^{23}, t^{24}) \subset
\CC[[t^4, t^5]]$.  Bold numbers correspond to monomials in the ideal. 
\begin{center}
\begin{tabular}{|c|c|c|c|c|c|c|c}
  \hline
  {\tiny 15} & {\tiny 19} & {\bf 23} & {\bf  27}  & {\bf 31} & {\bf 35} 
  & {\bf 39} & {\bf 43} \\
  \hline
  {\tiny 10} & {\tiny 14} & {\tiny 18} & {\tiny 22} & 
  {\bf 26} & {\bf 30} & {\bf 34} & {\bf 38} \\
  \hline
  {\tiny 5}  & {\tiny 9}  & {\tiny 13} & {\tiny 17} & 
  {\bf 21} & {\bf 25} & {\bf 29} & {\bf 33} \\
  \hline
  {\tiny 0}  & {\tiny 4}  & {\tiny 8}  & {\tiny 12} & 
  {\tiny 16} & {\tiny 20} & {\bf 24} & {\bf 28} \\
  \hline 
\end{tabular}
\end{center}
\end{example}

\begin{example}  The following staircase does 
not correspond to any ideal of $\CC[[t^4, t^5]]$, because $28 = 23 + 5$.
This occurs because the staircase does not descend quickly enough.
\begin{center}
\begin{tabular}{|c|c|c|c|c|c|c|c|c}
  \hline
  {\tiny 15} & 
  {\tiny 19} & {\bf 23} & {\bf 27}  & {\bf 31} & {\bf 35} & {\bf 39} & {\bf 43} 
  & {\bf 47}\\
  \hline
  {\tiny 10} & 
  {\tiny 14} & {\tiny 18} & {\tiny 22} & 
  {\bf 26} & {\bf 30} & {\bf 34} & {\bf 38} & {\bf 42}\\
  \hline
  {\tiny 5}  & 
  {\tiny 9}  & {\tiny 13} & {\tiny 17} 
  & {\bf 21} & {\bf 25} & {\bf 29} & {\bf 33} & {\bf 37} \\
  \hline
  {\tiny 0}  & {\tiny 4}  & {\tiny 8}  & {\tiny 12} & 
  {\tiny 16} & {\tiny 20} & {\tiny 24} & {\tiny 28} & {\bf 32} \\
  \hline 
\end{tabular}
\end{center}
\end{example}

We evaluate the residue in Lemma \ref{lem:res} 
by summing over the other singularities of
the expression.  These occur precisely at $\xi = q^{-2j}$ for
$j = 0, 1, \ldots, k-1$.  Note that
\[\res_{z=1/w}\frac{1}{1-wz} = - \frac{1}{w}\]
in order to evaluate the residue:
\begin{equation}
\frac{1}{1-q^{2k}} \sum_{j = 0}^{k-1} q^{2(n+1) j} 
\left(\prod_{i = 0}^{j-1} \frac{1}{1 - q^{2(i-j)}}\right) q^{-2j} 
\left(\prod_{i=j+1}^{k-1} \frac{1}{1 - q^{2(i-j)}}\right) 
\left(\prod_{i=0}^{k-1} 1 -a^2 q^{2(i-j)} \right)
\end{equation}
It remains to collect signs
and powers of $q$ in order to prove:

\begin{theorem}
  Let $\mathrm{gcd}(k,n)=1$.  Let $C$ be the curve cut out by $y^k = x^n$ 
  and let $p$ be the origin;  
  $\mu = (k-1)(n-1)$ is the Milnor number of this singularity, its
  link is the $k,n$ torus knot, and
  \begin{equation*}
    \JJ(\mbox{ k,n torus knot })  =   
    (a/q)^{\mu} (1-q^2) \!\! \int\limits_{\Hilbert{{C_p}}}\!\!
    q^{2l} (1-a^2)^{m-1} \, d \chi
  \end{equation*}
\end{theorem}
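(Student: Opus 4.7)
The plan is to complete the chain of equalities assembled in the preceding paragraphs. The substantive geometric and combinatorial work has already been carried out: the $\CC^*$ action on $\CC[[t^k,t^n]]$ lifts to $\Hilbert{C_p}$ preserving $l$ and $m$, so the integral reduces to a sum over monomial ideals; the Chinese remainder theorem identifies monomials with pairs in $\NN \times \{0,\ldots,k-1\}$ and monomial ideals with staircases $\phi_{k-1}\le\phi_{k-2}\le\cdots\le\phi_0\le\phi_{k-1}+n$, of colength $\sum \phi_i$ and generator count equal to the number of strict inequalities; and a column-by-column re-summation yields the residue formula displayed above. What remains is to evaluate that residue and to verify that, after multiplication by the normalization $(a/q)^\mu(1-q^2)$, the result agrees with Jones's explicit expression for $\JJ(T_{k,n})$.

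First I would evaluate the residue at $\xi=0$ by summing the residues at the other poles, which occur at the $k$ points where a factor in the denominator vanishes; one must additionally check that the residue at $\xi=\infty$ does not contribute, which is a straightforward degree comparison in $\xi$. Applying the identity $\res_{\xi=1/w}(1-w\xi)^{-1} = -1/w$ at each pole and simplifying yields the explicit single sum over $j$ shown in the second displayed equation. Next I would multiply this by $(a/q)^\mu(1-q^2)$ and reorganize to match Jones term by term: the leading factor $(1-q^{2k})^{-1}$ already matches; the discrepancy between $(1-a^2)^{m-1}$ in the theorem and the $(1-a^2)^m$ used in the intermediate identity accounts for the $(1-a^2)^{-1}$ in Jones; the two products of the form $\prod (1-q^{i-j})^{-1}$ turn into $[j]!^{-1}$ and $[k-1-j]!^{-1}$ after extracting powers of $q$; and the remaining factors must collect into $(-1)^j(q^2)^{jn+(k-1-j)(k-j)/2}\prod_{i=j+1-k}^j(q^{2i}-a^2)$.

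The hard part will be precisely this final bookkeeping step: one must rewrite products such as $\prod_{i=1}^j(1-q^{-2i})$ as $(-1)^j q^{-j(j+1)}\prod_{i=1}^j(1-q^{2i})$, re-index the product over $i$ appropriately, and verify that the accumulated powers of $q$ and of $(-1)$ sum to exactly the exponents appearing in Jones's formula. The Milnor number identity $\mu = (k-1)(n-1)$ is classical and absorbs into the overall prefactor. Once these elementary but finicky polynomial identities in $q$ and $a$ are checked, the theorem reduces to an identity between rational expressions that holds term by term in $j$, and the proof is complete.
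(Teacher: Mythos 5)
Your proposal follows the paper's own route exactly: the paper likewise treats the torus-fixed-point/staircase reduction and the residue formula as the substance of the argument, evaluates the residue at $\xi=0$ by summing over the poles at $\xi=q^{-2j}$ via $\res_{z=1/w}(1-wz)^{-1}=-1/w$, and then states that the theorem follows by ``collecting signs and powers of $q$'' to match Jones's formula, with the $(1-a^2)^{m}$ versus $(1-a^2)^{m-1}$ discrepancy absorbed into the normalization as you describe. Your added remark that one should check the residue at infinity vanishes is a sensible point of care the paper leaves implicit, but it does not change the argument.
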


\begin{corollary}
  Let $C$ be a rational curve, smooth away from a point $p$, and 
  formally isomorphic at 
  $p$ to $\mathrm{Spec}\,\, \CC[[x,y]]/(y^k=x^n)$.  Write 
  ${b \choose c}_{q^2}$ for $\frac{[b]!}{[c]![b-c]!}$.  Then
  \[
  (1-q^2)^{2}\int\limits_{\Hilbert{C}} \! q^{2l} \, d\chi \,\,\, = 
  \frac{{k+n \choose k}_{q^2}}{{k+n \choose 1}_{q^2}}
  \]  
\end{corollary}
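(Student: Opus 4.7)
The plan is to combine the theorem just proved---which identifies the integral on $\Hilbert{C_p}$ with $\JJ(T_{k,n})$---with the global/local stratification recalled in the introduction, specialize to $a = 0$, and reduce the resulting identity to the $q$-binomial theorem.

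Since $\gcd(k,n) = 1$ the germ $y^k = x^n$ is unibranch, so the normalization $\PP^1 \to C$ is a homeomorphism and hence $\chi(C) = 2$, $\chi(C_{\mathrm{sm}}) = 1$. The stratification identity from the introduction then reads
\[
(1-q^2)^2 \int_{\Hilbert{C}} q^{2l}\, d\chi \;=\; (1-q^2)\int_{\Hilbert{C_p}} q^{2l}\, d\chi.
\]
Setting $a = 0$ in the theorem just proved---where $(1-a^2)^{m-1}|_{a=0} = 1$ for every value of $m$---gives
\[
(1-q^2)\int_{\Hilbert{C_p}} q^{2l}\, d\chi \;=\; \lim_{a\to 0}(q/a)^\mu \JJ(T_{k,n}),
\]
and the limit is finite because Jones's closed form carries an explicit factor $(a/q)^\mu$ with $\mu = (k-1)(n-1)$.

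The third step is to evaluate this limit algebraically. Inside the product $\prod_{i=j+1-k}^j (q^{2i} - a^2)$ I would factor out the $i=0$ term $(1-a^2)$, canceling the $(1-a^2)$ in the denominator of Jones's formula. Setting $a = 0$ and collecting exponents---the three contributions $2jn$, $(k-1-j)(k-j)$, and $k(2j+1-k)$ simplify to $2jn + j(j+1)$---leaves
\[
\lim_{a\to 0}(q/a)^\mu \JJ(T_{k,n}) \;=\; \frac{1-q^2}{1-q^{2k}} \sum_{j=0}^{k-1}(-1)^j \frac{q^{2jn+j(j+1)}}{[j]!\,[k-1-j]!}.
\]
Clearing $[k-1]!$ rewrites the sum as $\sum_j (-1)^j q^{2jn+j(j+1)}{k-1 \choose j}_{q^2}$, which is exactly the $q$-binomial expansion of $\prod_{i=0}^{k-2}(1 - q^{2(n+1+i)}) = [k+n-1]!/[n]!$, applied with parameters $q^2$ and $x = -q^{2(n+1)}$. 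Restoring the prefactor $(1-q^2)/(1-q^{2k}) = (1-q^2)[k-1]!/[k]!$ yields $(1-q^2)[k+n-1]!/([k]!\,[n]!)$, which is precisely the ratio ${k+n \choose k}_{q^2}/{k+n \choose 1}_{q^2}$.

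The main obstacle is purely computational bookkeeping: isolating the $(1-a^2)$ factor inside Jones's product before taking $a \to 0$, and recognizing the resulting alternating sum as an instance of the $q$-binomial theorem. No additional geometric input beyond the preceding theorem is needed.
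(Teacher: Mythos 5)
Your proposal is correct and follows essentially the same route as the paper: invoke the theorem just proved (equivalently, Conjecture 1 for this singularity), pass to the $a=0$ specialization of Jones's formula after cancelling the $(1-a^2)$ factor coming from the $i=0$ term of the product, and identify the resulting alternating sum via the Newton/Gauss $q$-binomial identity $\sum_{r} q^{r(r-1)}\binom{s}{r}_{q^2}t^r = \prod_{r=0}^{s-1}(1+q^{2r}t)$ with $t=-q^{2(n+1)}$. The only difference is that you spell out the local-to-global stratification step (using $\chi(C)=2$ for the unibranch rational curve) that the paper leaves implicit in ``which implies Conjecture 1,'' and your exponent bookkeeping $2jn+(k-1-j)(k-j)+k(2j+1-k)=2jn+j(j+1)$ matches the paper's.
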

\begin{proof}
  We have proven Conjecture \ref{conj:homfly} in the case of the
  singularity in question, which implies Conjecture \ref{conj:justqs}.
  Substituting in, we see
  \begin{eqnarray*}
    (1-q^2)^{2}\int\limits_{\Hilbert{C}} \! q^{2l} \, d\chi \,\,\, & = &
    \frac{(1-q^2)}{(1-q^{2k})}
    \sum_{j=0}^{k-1} (-1)^j \frac{(q^2)^{j n + (k-1-j)(k-j)/2}}{[j]!\,[k-1-j]!}  
    \prod_{i=j+1-k}^jq^{2i}
    \\
    & = & \frac{(1-q^2)}{[k]!}
    \sum_{j = 0}^{k-1} {k-1 \choose j}_{q^2} q^{j(j-1)} (-q^{2(n+1)})^j \\
  \end{eqnarray*}
  Now
  we use the ``Newton formula for Gaussian binomials''
  \[
    \sum_{r=0}^s q^{r(r-1)} {s \choose r}_{q^2} t^r = \prod_{r=0}^{s-1}(1+q^{2r} t) 
  \]
  to deduce 
  \[    (1-q^2)^{2}\int\limits_{\Hilbert{C}} \! q^{2l} \, d\chi \,\,\, =
  \frac{(1-q^2)}{[k]!} \prod_{j=0}^{k-2} (1-(q^2)^{n+j+1}) = 
  \frac{{k+n \choose k}_{q^2}}{{k+n \choose 1}_{q^2}}
  \]
\end{proof}

\noindent Setting $q=1$ recovers Beauville's  formula \cite{B} for the Euler
  number of the compactified Jacobian.

\section{Hilbert schemes of unibranch singularities}
\label{sec:Hilbert}

Let $C$ be the germ of a unibranch singularity, and  
fix a normalization $\oO = \oO_C \hookrightarrow \CC[[t]]$.  
Let $\nu:\CC[[t]] \to \NN$ 
be the valuation taking a series to the degree of its lowest degree term.  
Lemma \ref{lem:lengths} shows that $\nu(f) = \dim_\CC \oO/f\oO$, so
$\nu|_{\oO}$ is independent of the choice of normalization.
Let $\Gamma = \nu(\oO)$ -- when appropriate we implicitly exclude
$0$ from the domain of $\nu$ -- 
then $\Gamma$ is a cofinite\footnote{If not, then the fraction field of $\oO$ would be 
some $k((t^r)) \subsetneq k((t))$, but $\oO$ and
its normalization $k[[t]]$ must share the same fraction field.  For
the theory of unibranch singularities, we refer
to the book of Zariski and Teissier \cite{ZT}.} 
subset of $\NN$ closed
under addition.  We
employ the filtration $F_k \oO = \{f \in \oO | \nu(f) \ge k\}$.

\begin{lemma}
  $\dim F_k \oO / F_{k+1} \oO \le 1$. 
\end{lemma}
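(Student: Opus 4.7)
The plan is to exhibit a linear map $F_k\oO \to \CC$ whose kernel is $F_{k+1}\oO$. Concretely, choose a normalization $\oO \hookrightarrow \CC[[t]]$ and define $\lambda_k : F_k\oO \to \CC$ by sending $f = \sum_{i \ge k} c_i t^i$ to its coefficient $c_k$ of $t^k$. This is plainly $\CC$-linear, and by definition $f \in F_{k+1}\oO$ iff $\nu(f) \ge k+1$ iff $c_k = 0$, so $\ker \lambda_k = F_{k+1}\oO$. Therefore $\lambda_k$ factors through an injection $F_k\oO / F_{k+1}\oO \hookrightarrow \CC$, which gives the desired bound on dimension.

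Equivalently, and more in the style of the earlier Section~\ref{sec:CDG} computation, one can argue directly: if $f, g \in F_k\oO$ both have valuation exactly $k$, with leading terms $at^k$ and $bt^k$ where $a,b \in \CC^*$, then $bf - ag \in \oO$ has valuation at least $k+1$, hence lies in $F_{k+1}\oO$. Thus the classes $[f]$ and $[g]$ in $F_k\oO/F_{k+1}\oO$ are $\CC$-proportional; and any element of $F_k\oO$ with valuation strictly larger than $k$ already represents the zero class. So every pair of classes is linearly dependent, forcing the quotient to have dimension at most one.

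I would write up the first version, since it is shorter and makes clear the quotient is zero exactly when $k \notin \Gamma$ and one-dimensional exactly when $k \in \Gamma$ — an observation that will presumably be used immediately afterward to stratify $\Hilbert{C}$ by semigroup data. There is no real obstacle here: everything reduces to the definition of the valuation and the fact that the leading coefficient is a well-defined $\CC$-linear functional. The only minor point worth noting is that $\lambda_k$ depends on the chosen normalization $\oO \hookrightarrow \CC[[t]]$, but the subspace $F_{k+1}\oO \subset F_k\oO$ does not, which is exactly what the lemma asserts.
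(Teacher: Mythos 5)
Your proof is correct; the paper omits the proof of this lemma entirely, treating it as immediate, and your leading-coefficient argument is exactly the evident one the authors had in mind. The added remark that $F_k\oO/F_{k+1}\oO$ is one-dimensional precisely when $k \in \Gamma = \nu(\oO)$ is also accurate and is indeed what Corollary \ref{cor:semiidealcolength} uses right afterward.
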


If $J$ an ideal of $\oO$, then $\nu(J) \subset \NN$ 
is a semigroup ideal: $\nu(J) + \Gamma \subset \nu(J)$.

\begin{corollary} \label{cor:semiidealcolength}
  Let $J$ be an ideal of $\oO$.  Then
  $\dim_\CC \oO/J = \# \nu(\oO) \setminus \nu(J)$. 
\end{corollary}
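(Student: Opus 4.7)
The plan is to filter both $\oO$ and $J$ by the valuation $\nu$ and compare associated gradeds. Set $F_k J = J \cap F_k\oO$, so that $F_k J/F_{k+1}J \hookrightarrow F_k\oO/F_{k+1}\oO$. By the preceding lemma the latter has dimension at most one, with equality iff $k \in \Gamma = \nu(\oO)$, a witness being any element of $\oO$ of valuation exactly $k$. Applied verbatim to $J$ the same reasoning gives $\dim_\CC F_k J/F_{k+1}J = 1$ if $k \in \nu(J)$ and $0$ otherwise. Note that $\nu(J) \subset \Gamma$, since $J \subset \oO$.

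Next, transport to the quotient via the induced filtration $G_k = (F_k\oO + J)/J$. The general identity $(A+B)/(A'+B) \cong A/(A' + A\cap B)$, applied with $A = F_k\oO$, $A' = F_{k+1}\oO$, $B = J$, gives
\[
G_k/G_{k+1} \;\cong\; F_k\oO/\bigl(F_{k+1}\oO + F_k J\bigr).
\]
The image of $F_k J$ in the one-dimensional-or-zero quotient $F_k\oO/F_{k+1}\oO$ is nonzero exactly when some element of $J$ has valuation exactly $k$, i.e. exactly when $k \in \nu(J)$. Combining with the previous step,
\[
\dim_\CC G_k/G_{k+1} \;=\; [k \in \Gamma]\,-\,[k \in \nu(J)] \;=\; [k \in \Gamma \setminus \nu(J)].
\]

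To conclude, sum over $k$. If $\dim_\CC \oO/J < \infty$, then $F_k\oO \subset J$ for $k$ sufficiently large, so $G_\bullet$ stabilizes at zero, and $\dim_\CC \oO/J = \sum_k \dim_\CC G_k/G_{k+1} = \#\bigl(\Gamma \setminus \nu(J)\bigr)$. If $\dim_\CC \oO/J = \infty$, then infinitely many graded quotients are nonzero, forcing $\#(\Gamma \setminus \nu(J)) = \infty$ as well, and the equality still holds. The only mildly delicate point is the isomorphism for $G_k/G_{k+1}$, which is the one substantive algebraic input; after that the argument is pure bookkeeping between the filtration of $\oO/J$ and the complement of $\nu(J)$ inside $\Gamma$.
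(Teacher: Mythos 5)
Your argument is correct and is essentially the paper's own proof: both compute $\dim_\CC \oO/J$ as the sum over $k$ of $\dim_\CC F_k\oO/(F_{k+1}\oO + J\cap F_k\oO)$ and observe that this graded piece is one-dimensional exactly when $k \in \nu(\oO)\setminus\nu(J)$. You have merely spelled out the isomorphism-theorem step and the (unnecessary for the application, but harmless) infinite-colength case in more detail.
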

\begin{proof}
  As $\dim_\CC \oO/J$ is finite,
  $\dim \oO/J = \sum \dim F_n \oO / (F_n\oO \cap J + F_{n+1} \oO)$.  This contributes
  $1$ exactly when there is a ring element with valuation $n$, but no ideal
  element with valuation $n$; i.e., for $n \in \nu(\oO) \setminus \nu(J)$. 
\end{proof}

For elements 
$a_0, \ldots, a_k \in \Gamma$, we denote the semigroup ideal they
generate by 
\[(a_0,\ldots,a_k)_\Gamma = \{a_i + \gamma_i | \gamma_i \in \Gamma \}\]

\begin{corollary} 
  Let $J$ be an ideal of $\oO$.  For
  $f_0, \ldots, f_k \in J$, 
  \[(\nu(f_0),\ldots,\nu(f_k))_\Gamma = \nu(J)  \implies
  (f_0,\ldots,f_k) = J\]
\end{corollary}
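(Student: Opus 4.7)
The plan is to set $J' = (f_0,\ldots,f_k) \subseteq J$ and show $J' = J$ by a colength comparison, leveraging Corollary \ref{cor:semiidealcolength}.

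First I would verify that $\nu(J') = \nu(J)$. The inclusion $\nu(J') \subseteq \nu(J)$ is immediate from $J' \subseteq J$. For the reverse inclusion, any element $a \in (\nu(f_0),\ldots,\nu(f_k))_\Gamma$ has the form $a = \nu(f_i) + \gamma$ for some $i \in \{0,\ldots,k\}$ and some $\gamma \in \Gamma$; choosing $g \in \oO$ with $\nu(g) = \gamma$, the product $g f_i$ lies in $J'$ and satisfies $\nu(g f_i) = \nu(g) + \nu(f_i) = a$, since valuations add on the domain $\oO$. Hence $a \in \nu(J')$, and combined with the hypothesis $(\nu(f_0),\ldots,\nu(f_k))_\Gamma = \nu(J)$ this gives $\nu(J) \subseteq \nu(J')$.

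Next I would invoke Corollary \ref{cor:semiidealcolength} on both ideals to obtain
\[\dim_\CC \oO/J' \,=\, \#\,\Gamma \setminus \nu(J') \,=\, \#\,\Gamma \setminus \nu(J) \,=\, \dim_\CC \oO/J,\]
whence $J' \subseteq J$ are ideals of equal finite $\CC$-codimension in $\oO$ and must coincide.

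The only subtle step is the reverse inclusion $\nu(J) \subseteq \nu(J')$ in the first paragraph; once this is established the rest is a direct invocation of the previous corollary. A minor technicality is that Corollary \ref{cor:semiidealcolength} produces a useful identity only when $\oO/J$ has finite length, which is automatic in the Hilbert scheme application we have in mind. In full generality one could reduce to the finite-colength case by intersecting with the filtration $F_N \oO$ and passing to a limit, exploiting that a finitely generated ideal in the complete local Noetherian ring $\oO$ is closed in its maximal-ideal-adic topology.
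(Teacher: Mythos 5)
Your argument is correct and is essentially the paper's own proof: both set $J'=(f_0,\ldots,f_k)$, observe $\nu(J')\supset(\nu(f_0),\ldots,\nu(f_k))_\Gamma=\nu(J)$ (you just spell out the additivity of $\nu$ on products, which the paper leaves implicit), and conclude $J'=J$ by the colength identity of Corollary \ref{cor:semiidealcolength}. The closing remark about finite colength is a harmless extra precaution; in this unibranch setting any nonzero ideal automatically has finite colength.
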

\begin{proof}
  Let $J'$ be the ideal generated by the $f_i$.   Since 
  $J' \subset J$, surely $\nu(J') \subset \nu(J)$.  But 
  \[\nu(J') \supset (\nu(f_0),\ldots,\nu(f_k))_\Gamma = \nu(J)\]  
  Thus $    
    \dim J / J' = \dim \oO / J' - \dim \oO / J  = 
    \# \Gamma \setminus \nu(J')  - 
    \# \Gamma \setminus \nu(J) = 0$.
\end{proof}

\noindent {\bf Remark.}  The converse is false.  
The ring $\oO = k[[t^4, t^6+ t^7]]$ has semigroup
$\langle 4,6,13\rangle$.  Its maximal ideal 
$M = (t^4, t^6+t^7)$ has semigroup ideal $\nu(M) = (4,6,13)_\Gamma$.  In fact, 
there is no ideal $J$ with $\nu(J) = (4,6)_\Gamma$:  any
such ideal contains $t^4, t^6 + t^7$, hence
$(t^6 + t^7)^2 - (t^4)^3 = 2 t^{13} + t^{14}$.
\vspace{2mm}

We write $\Hilbert{\Gamma}$ for the set of semigroup ideals of
$\Gamma$, and view $\nu$ as a constructible map
\[\nu:\Hilbert{C} \to \Hilbert{\Gamma}\]
which lets us define a 
stratification $C^{[\mathfrak{j}]} = \nu^{-1}(\mathfrak{j})$.  Corollary
\ref{cor:semiidealcolength} shows that this is a sub-stratification of
the usual one by length: $C^{[\mathfrak{j}]} \subset C^{[\#\Gamma \setminus \mathfrak{j}]}$. 

\vspace{1mm}
\noindent {\bf Remark.}  Teissier \cite{ZT} constructs a
$\CC^*$-equivariant deformation 
$\mathcal{C} \to \AA^1$ whose generic fibre is $C$ and 
whose special fibre is 
the not-necessarily-planar $C_\Gamma = \mathrm{Spec}\,\,\CC[[\Gamma]]$.   The
central fibre carries a
natural $\CC^*$ action.  Lifting the action to the Hilbert scheme, 
$(\Hilbert{{C_\Gamma}})^{\CC^*} = \Hilbert{\Gamma}$.  The map $\nu$ amounts 
to taking the $t \to 0$ limit of the $\CC^*$ action on the relative
Hilbert scheme of $\mathcal{C}/\AA^1$.  
\vspace{1mm}

\begin{lemma}
  For $g \in \Gamma$ there exists a unique element
  $\tau_g \in \oO \subset \CC[[t]]$  
  of the form 
  \[\tau_g = t^g + \sum_{g < i \notin \Gamma} c_i t^i\]
\end{lemma}
\begin{proof}
  Since $g \in \Gamma$, there is some element $\tilde{\tau} \in \oO$
  with leading term $t^g$.  Terms in $\tilde{\tau}$ of degrees
  in $\Gamma$ may be successively removed; the process converges
  since $\oO$ is complete.  Given $\tau_g \ne \tau_g'$ of the form
  prescribed, we have $\nu(\tau_g -\tau_g') \ne \Gamma$, which is absurd.
\end{proof}

\begin{definition} \label{def:stratification}
  Fix $a_0,\ldots,a_k \in \Gamma$.
  Use $\Sigma_\lambda := \Gamma_{> a_\lambda} \setminus (a_0,\ldots,a_k)_\Gamma$
  to index the set of indeterminates
  $\mathbf{S} := \{S_{\lambda,i}\,|\,i\in \Sigma_\lambda \}$.  
  Let 
  $V_{a_0,\ldots,a_k} := \mathrm{Spec}\,\, \CC[\mathbf{S}]$, and 
  define $\mathbf{f}_\lambda \in \oO [\mathbf{S}]$ by
  \[
  \mathbf{f}_\lambda := \tau_{a_\lambda} + \sum_{i \in \Sigma_\lambda} 
  \tau_i S_{\lambda, i} 
  \]
  and form the ideal $\mathcal{J} :=  (\mathbf{f}_0, \ldots, \mathbf{f}_k)
  \subset \oO [\mathbf{S}]$.
  Consider the subvariety
  \[ U_{a_0,\ldots,a_k} := \{s \in V_{a_0,\ldots,a_k}\,|\,  
  \nu(\mathcal{J}|_s) = (a_0,\ldots,a_k)_\Gamma\} \]
  As we always have $(a_0,\ldots,a_k)_\Gamma \subset \nu(\mathcal{J}|_s)$,
  $U_{a_0,\ldots,a_k}$ is equivalently described as the locus where
  $\dim \oO/\mathcal{J}_s = \# \Gamma \setminus (a_0,\ldots,a_k)_\Gamma$.
  As the Hilbert polynomial is constant and the base is reduced, 
  $\oO [\mathbf{S}]/ \mathcal{J}$ 
  is flat over $U_{a_0,\ldots,a_k}$.  Thus we get a map
  \[\Psi_{a_0,\ldots,a_k}:U_{a_0,\ldots,a_k} \to 
  C^{[(a_0,\ldots,a_k)_\Gamma]} \subset \Hilbert{C}\]
\end{definition}

\begin{theorem} \label{thm:stratification}
  $\Psi_{a_0,\ldots,a_k}:U_{a_0,\ldots,a_k} \to 
  C^{[(a_0,\ldots,a_k)_\Gamma]}$  is bijective. 
\end{theorem}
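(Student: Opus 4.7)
The plan is to exhibit, for each ideal $J \in C^{[(a_0,\ldots,a_k)_\Gamma]}$, a distinguished tuple of generators $f_0,\ldots,f_k$ whose expansions in the $\tau$-basis match the normal form from Definition \ref{def:stratification}. Injectivity of $\Psi_{a_0,\ldots,a_k}$ will then be immediate from a short valuation argument, and surjectivity amounts to checking that these distinguished generators produce $J$ and that the associated parameter point lands in $U_{a_0,\ldots,a_k}$.

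The key linear-algebra fact I would establish first is: for any finite-colength ideal $J \subset \oO$, the residues $\{\tau_i \bmod J : i \in \Gamma \setminus \nu(J)\}$ form a $\CC$-basis of $\oO/J$. Linear independence is immediate --- a nontrivial relation $\sum c_i \tau_i \in J$ would have valuation equal to the smallest $i$ with $c_i \neq 0$, placing an element of $\Gamma \setminus \nu(J)$ into $\nu(J)$. The dimension count matches by Corollary \ref{cor:semiidealcolength}. Given $J$ with $\nu(J) = (a_0,\ldots,a_k)_\Gamma$, I would expand each $\tau_{a_\lambda}$ in this basis and set $f_\lambda = \tau_{a_\lambda} - \sum_{i \in \Gamma \setminus \nu(J)} c_{\lambda,i} \tau_i \in J$. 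Rerunning the valuation argument on $f_\lambda$ forces $c_{\lambda,i} = 0$ whenever $i < a_\lambda$, so $f_\lambda = \tau_{a_\lambda} + \sum_{i \in \Sigma_\lambda} s_{\lambda,i} \tau_i$ with $s_{\lambda,i} = -c_{\lambda,i}$, exactly the shape specified in Definition \ref{def:stratification}.

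The resulting point $s = (s_{\lambda,i})$ lies in $U_{a_0,\ldots,a_k}$: by the corollary following \ref{cor:semiidealcolength}, the equality $(\nu(f_0),\ldots,\nu(f_k))_\Gamma = (a_0,\ldots,a_k)_\Gamma = \nu(J)$ forces $(f_0,\ldots,f_k) = J$, so $\dim \oO/(f_0,\ldots,f_k) = \#\Gamma \setminus (a_0,\ldots,a_k)_\Gamma$. Hence $\Psi_{a_0,\ldots,a_k}(s) = J$, giving surjectivity. For injectivity, suppose two normal-form tuples $(f_\lambda)$ and $(f'_\lambda)$ generate the same ideal $J$. Each difference $f_\lambda - f'_\lambda = \sum_{i \in \Sigma_\lambda}(s_{\lambda,i} - s'_{\lambda,i})\tau_i$ lies in $J$, and if nonzero its valuation is an element of $\Sigma_\lambda \subset \Gamma \setminus \nu(J)$ that would have to lie in $\nu(J)$ --- contradiction.

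The main obstacle is the basis claim for $\oO/J$: once that structural fact is in hand, everything downstream is a short valuation computation. A secondary subtlety is verifying that the index set $\Sigma_\lambda$ from Definition \ref{def:stratification} agrees with $\Gamma_{>a_\lambda} \cap (\Gamma \setminus \nu(J))$ under the hypothesis $\nu(J) = (a_0,\ldots,a_k)_\Gamma$, which is a set-theoretic check explaining why all the $a_\mu$ are automatically excluded.
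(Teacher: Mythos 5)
Your proof is correct and follows essentially the same route as the paper: injectivity via the valuation of the difference $f_\lambda - f'_\lambda$ of two normal-form generators, and surjectivity by reducing arbitrary generators of $J$ with valuations $a_\lambda$ to the normal form of Definition \ref{def:stratification}. The only difference is one of packaging: you perform the reduction in a single step by expanding $\tau_{a_\lambda}$ in the basis $\{\tau_i \bmod J\}_{i \in \Gamma\setminus\nu(J)}$ of $\oO/J$ (with the dimension count from Corollary \ref{cor:semiidealcolength}), whereas the paper iteratively eliminates terms and appeals to convergence in the complete local ring --- your version is, if anything, a little cleaner.
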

\begin{proof}  
  Let $\mathfrak{j} = (a_0,\ldots,a_k)_\Gamma$. 
  Consider a closed point $(s_{\lambda,i})$ 
  in the preimage of $C^{[\mathfrak{j}]}$.  
  This corresponds to an ideal $J = (f_0, \ldots, f_k)$ where 
  $f_i = \tau_{a_\lambda} + \sum_{i \in \Sigma_\lambda} \tau_i s_{\lambda, i}$.  Suppose
  $J$ is also the ideal corresponding to $(s'_{\lambda,i\in\Sigma_\lambda})$, hence
  has generators 
  $f'_\lambda = \tau_{a_\lambda} + \sum_{i \in \Sigma_\lambda} \tau_i s'_{\lambda, i}$.  Now
  $f_\lambda - f'_\lambda \in J$, but on the other hand 
  $\nu(f_\lambda - f'_\lambda) \in \Gamma \setminus \mathfrak{j}$ unless
  $f_\lambda = f'_\lambda$.  Thus the map is injective.  
  
  For surjectivity, fix an ideal $J$ with $\nu(J) = \mathfrak{j}$. 
  Choose lifts of $f_\lambda \in J$ of the $a_\lambda$.  A generating
  set will still generate if we modify 
  $f_\lambda \to u f_\lambda + \sum_{\nu \ne \lambda} v f_\nu$ for invertible
  $u$ and arbitrary $v$.  
  Iteratively removing terms of the form $t^n$ for 
  $n \in \mathfrak{j} \setminus \lambda$ from $f_\lambda$ converges
  to yield generators of $J$ of the form required by Definition
  \ref{def:stratification}.
\end{proof}

\noindent{\bf Remark.}  As defined, the $V_{a_0,\ldots,a_k}$ depend on the
choice of generators of the semigroup ideal $(a_0,\ldots,a_k)_\Gamma$.  
However, a semigroup ideal has a unique minimal generating set; henceforth
if we write $V_{\mathfrak{j}}$ to mean that the minimal generating set
of the semigroup ideal is chosen.  On the other hand, while it may likewise
seem that $U_{a_0,\ldots,a_k}$ depends on the choice of generators, Theorem
\ref{thm:stratification} implies that
all choices yield spaces which biject onto $C^{[\mathfrak{j}]}$. 
\vspace{2mm}

\noindent {\bf Caution.} 
The function giving the number of generators need not be constant on
the $U_{\mathfrak{j}}$.   

\vspace{2mm}
For computations, the following consequence of Lemma \ref{lem:lengths} 
is useful. 

\begin{corollary} \label{cor:lengths}
  Let $\Gamma \subset \NN$ be a semigroup with 
  $\#\NN \setminus \Gamma < \infty$.   For $i \in \Gamma$,
  $\# \Gamma \setminus (i + \Gamma)  = i$. 
  More generally, let $\Gamma \subset \Delta \subset \NN$, 
  and suppose $i + \Delta \subset \Gamma$. 
  Then
  \[\# \Gamma \setminus (i + \Delta) = i - \# \Delta \setminus \Gamma\]
\end{corollary}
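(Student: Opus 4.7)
The plan is to deduce the first identity from Lemma \ref{lem:lengths} by realizing both cardinalities as $\CC$-dimensions of appropriate rings modulo a principal ideal, and then to derive the general statement from the first part by a short set-theoretic decomposition.

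For the first part, fix $i \in \Gamma$ and let $A = \bigoplus_{\gamma \in \Gamma} \CC \cdot t^\gamma \subset \CC[[t]] = B$. Since $\Gamma$ is closed under addition, $A$ is a subring of $B$, and because $\#\NN \setminus \Gamma < \infty$, the quotient $B/A$ is finite-dimensional over $\CC$, hence of finite length as an $A$-module. Take $f = t^i \in A$, a nonzerodivisor in both rings. Then $fA$ has $\CC$-basis $\{t^{i+\gamma} : \gamma \in \Gamma\}$, so $A/fA$ has a $\CC$-basis indexed by $\Gamma \setminus (i + \Gamma)$, while $B/fB = \CC[[t]]/(t^i)$ has dimension $i$. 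Both quotients then have finite length, and Lemma \ref{lem:lengths} yields $\#\Gamma \setminus (i + \Gamma) = i$.

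For the general statement, the hypothesis $i + \Delta \subset \Gamma$ together with $\Gamma \subset \Delta$ forces $i + \Gamma \subset i + \Delta \subset \Gamma$, giving the two disjoint decompositions
\[
\Gamma = (\Gamma \setminus (i + \Delta)) \sqcup (i + \Delta), \qquad \Gamma = (\Gamma \setminus (i + \Gamma)) \sqcup (i + \Gamma).
\]
Combining with $i + \Delta = (i + \Gamma) \sqcup (i + (\Delta \setminus \Gamma))$ and observing that all of these sets are finite (since $\Delta \setminus \Gamma \subset \NN \setminus \Gamma$ is finite), a cardinality count gives
\[
\#\Gamma \setminus (i + \Gamma) \;=\; \#\Gamma \setminus (i + \Delta) \;+\; \#\Delta \setminus \Gamma.
\]
Substituting the value $i$ from the first part yields $\#\Gamma \setminus (i + \Delta) = i - \#\Delta \setminus \Gamma$.

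The proof is essentially pure bookkeeping once one realizes that the semigroup algebra and its integral closure give the two rings to which Lemma \ref{lem:lengths} applies; the only point requiring any care is the verification of the finite-length hypotheses, which follows at once from $\#\NN \setminus \Gamma < \infty$. I expect no real obstacle beyond choosing the cleanest decomposition in the second part.
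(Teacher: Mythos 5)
Your proof is correct and is essentially the paper's own argument: the first identity comes from applying Lemma \ref{lem:lengths} to the semigroup ring of $\Gamma$ inside its normalization with $f = t^i$, and the second from the decomposition $\Gamma\setminus(i+\Gamma) = \bigl(\Gamma\setminus(i+\Delta)\bigr) \sqcup \bigl((i+\Delta)\setminus(i+\Gamma)\bigr)$ together with the observation that translation by $i$ identifies $(i+\Delta)\setminus(i+\Gamma)$ with $\Delta\setminus\Gamma$. The one slip is notational: $A=\bigoplus_{\gamma\in\Gamma}\CC\cdot t^{\gamma}$ is a subring of $\CC[t]$, not of $\CC[[t]]$, and $\CC[[t]]/A$ is \emph{not} finite-dimensional, so you should either take $B=\CC[t]$ (as the paper does with $\CC[\Gamma]\hookrightarrow\CC[x]$) or replace the direct sum by the ring of power series supported on $\Gamma$; with either fix the finite-length hypotheses hold and the argument goes through unchanged.
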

\begin{proof}
  Consider $\CC[\Gamma]$, 
  the ring with generators $\{x^\gamma\}_{\gamma \in \Gamma}$
  and relations $x^{\gamma_1}  x^{\gamma_2} = x^{\gamma_1 + \gamma_2}$.  Now apply
  Lemma \ref{lem:lengths} to $\CC[\Gamma] \hookrightarrow \CC[x]$ and
  $x^i \in \CC[\Gamma]$ to see   $\# \Gamma \setminus (i + \Gamma)  = i$. 
  The final statement follows from
  $\# \Gamma \setminus (i + \Delta) + \# (i + \Delta) \setminus (i + \Gamma) = 
  \# \Gamma \setminus (i + \Gamma) = i$ and
  $\# (i + \Delta) \setminus (i + \Gamma) = 
  \# \Delta \setminus \Gamma$.
\end{proof}

\section{The singularity with semigroup $\langle 4,6,13 \rangle$}
\label{sec:4613}

We consider now the ring $\oO = \CC[[t^4, t^6 + t^7]]$ 
and the singularity $C = \mathrm{Spec}\,\oO$.  
Let us calculate the semigroup. As
$(t^6 + t^{7})^2 - (t^4)^3 = t^{13} (2 + t)$ we see 
$4, 6, 13 \in \nu(\oO)$.  
Suppose there is
$P(x,y) \in \CC[[x,y]]$ such that $P(t) = P(t^4,t^6 + t^{7})$ has leading
term $t^{15}$.  Then certainly $P(x,y)$ must have two monomials, 
$x^ay^b$ and $x^c y^d$, such that $4a + 6b = 4c + 6d < 15$; moreover
their leading terms must cancel when evaluated at $x = t^4$,  $y=t^6 + t^7$.
By inspection, the first condition is only satisfied for 
$4a + 6b = 12$, but in this case we have already seen that 
the leading term of $P(t)$ is $t^{13}$.  So $15 \notin \nu(\oO)$, and 
the semigroup is
\[\Gamma = \langle 4, 6, 13 \rangle = \{0,4,6,8,10,12,13,14,16,17,\ldots\}\]
In fact, Zariski has shown that this is the only plane singularity with
this semigroup \cite{ZT}.  

The link of this singularity is 
the (2,13) cable of the (2,3) torus knot \cite{EN}.  Its HOMFLY polynomial,
as calculated by computer, is
\begin{eqnarray*}
- & a^{22} & ( 3 + 4z^2 + z^4 )\\
+ & a^{20} & (20 + 70 z^2 + 84 z^4 + 45 z^6 + 11 z^8 + z^{10}) \\
- & a^{18} & ( 39 + 220 z^2 + 468 z^4 + 496 z^6 + 286 z^8 + 91 z^{10} + 15z^{12} + z^{14}) \\
+ & a^{16} & (23 + 179 z^2 + 540 z^4 + 836 z^6 + 726 z^8 + 365 z^{10}
+ 105 z^{12} + 16 z^{14} + z^{16})
\end{eqnarray*}
where $z = q-q^{-1}$.  
According to Conjecture \ref{conj:justqs}, the coefficient
of $z^{2h}$ in the bottom row above is the number $n_h$ of Pandharipande
and Thomas \cite[Appendix 2]{PT3}. In particular, the Euler characteristic of
the Jacobian factor of this singularity should be $n_0 = 23$.  This was previously
calculated by Piontkowski \cite{Pi} using similar methods.\footnote{
Indeed, Piontkowski also
determines the Euler characteristic of the Jacobian factor for all
singularities with semigroups $\langle 4, 2q, s \rangle$, 
$\langle  6, 8, s \rangle$, and $\langle 6, 10, s \rangle$.  He does so
by constructing a stratification of the Jacobian factor by affine
spaces, and suggests that it is unlikely that any other singularities
will admit such a stratification.  We remark that
his list exhausts the algebraic $(2,n)$ cables of $(2,p)$, 
$(3,4)$ and $(3,5)$ torus knots, or in other words, the $(2,n)$ cables of
knots arising from simple singularities.  
}

We turn now to the calculation of the integral in Conjecture \ref{conj:homfly}
by using the stratification of Section \ref{sec:Hilbert}.  
Moreover we group the semigroup ideals which are isomorphic as 
$\Gamma$-modules, i.e., differ merely by a shift.  Let
$\mathrm{Mod}(\Gamma)$ denote the set of $\Gamma$ submodules of $\NN$ 
containing zero. 
Using Theorem \ref{thm:stratification} and Corollary \ref{cor:lengths}, 
\begin{equation} \label{eq:breakdown}
  \int\limits_{\Hilbert{{C}}}\!\!
    q^{2l} (1-a^2)^{m-1} \, d \chi \,\,\, = \!\!\!
    \sum_{\Delta \in \mathrm{Mod}(\Gamma)} \!\!\! q^{-(8 - \#\NN \setminus \Delta)} 
    \sum_{i + \Delta \subset \Gamma} 
    q^{2i} \int \limits_{U_{i+\Delta}}  (1-a^2)^{m-1} \, d \chi
\end{equation}

The combinatorial data required to compute the right hand side
is tabulated in Figure \ref{fig:data}, which appears at the end
of the article.  We compute the rightmost
integral by determining the spaces
$U_{i+\Delta}$, together with their stratifications by the number of generators.

\begin{lemma} \label{lem:whojumps}
  Let $\Gamma = \langle 4, 6, 13 \rangle$, and consider
  $\Delta \in \mathrm{Mod}(\Gamma)$.
  Let $0 = \alpha_0 < \alpha_1 < \ldots$ be its minimal set of generators.  
  Choose arbitrary
  $f_0, f_1, \ldots \in k[[t]]$ with degrees $\alpha_0, \alpha_1, \ldots$.  
  Then if $\eta = \nu (\sum f_i \phi_i) \notin \Delta$ for $\phi_i \in \oO$, 
  then:
  \begin{itemize}
  \item $\alpha_1 = 2$ and 
    $1, 3 \notin \Delta$
  \item $\eta \in \{ 7,9,11,15\}$ 
  \item $\nu (f_0 \phi_0) = \nu( f_1 \phi_1) = \min\{ \nu( f_i \phi_i)\}$ 
  \end{itemize}
\end{lemma}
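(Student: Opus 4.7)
The plan is to analyze the cancellation pattern forced by the hypothesis $\eta \notin \Delta$. For each index $i$ with $\phi_i \neq 0$, one has $\nu(f_i \phi_i) = \alpha_i + \nu(\phi_i) \in \alpha_i + \Gamma \subset \Delta$, so $m := \min_i \nu(f_i \phi_i)$ lies in $\Delta$. Since $\eta \geq m$ and $\eta \notin \Delta$, necessarily $\eta > m$; hence the leading terms at $t^m$ must cancel, which forces at least two indices $i < j$ with $\nu(f_i \phi_i) = \nu(f_j \phi_j) = m$ whose leading coefficients (together with any other contributions at degree $m$) sum to zero.

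Because $\Delta \supset \Gamma$ and $\Gamma$ contains every integer $\geq 16$, $\eta$ lies in the gap set $\NN \setminus \Gamma = \{1, 2, 3, 5, 7, 9, 11, 15\}$, so $\eta \leq 15$ and $m \leq 14$. The identity $\alpha_j - \alpha_i = \nu(\phi_i) - \nu(\phi_j)$ exhibits $\alpha_j - \alpha_i$ as a difference of two elements of $\Gamma$, and by minimality of $\alpha_j$ as a generator of $\Delta$ (so $\alpha_j \notin \alpha_i + \Gamma$), this difference is itself a gap of $\Gamma$.

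The main step is a finite enumeration of admissible pairs $(\alpha_i, \alpha_j)$, each being $0$ or a gap of $\Gamma$ with $\alpha_j - \alpha_i$ a gap of $\Gamma$ realizable as a difference of two elements of $\Gamma$. For each candidate pair I would compute the smallest achievable $m$ and test whether the submodule $\{0, \alpha_i, \alpha_j\} + \Gamma \subset \Delta$ still admits a gap strictly above $m$, as is required to host $\eta$. A direct tabulation shows that only the pair $(0, 2)$ survives: it gives $m = 6$ (from $\nu(\phi_0) = 6$, $\nu(\phi_1) = 4$), and $\{0, 2\} + \Gamma$ leaves $\{7, 9, 11\}$ as available gaps above $6$. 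Every other pair either forces $m \geq 15$ or contributes enough to $\Delta$ to absorb $15 = 2 + 13$ along with the remaining small gaps above $m$.

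This forces $i = 0$, $j = 1$, and $\alpha_1 = 2$, giving the first assertion of the first bullet and the third bullet. The condition $1 \notin \Delta$ is immediate since $1 < \alpha_1 = 2$ and $1 \notin \Gamma$; and if $3 \in \Delta$ then $3 + \Gamma \supset \{7, 9, 11\}$ together with $15 = 2 + 13 \in 2 + \Gamma$ would leave no gap of $\Delta$ above $m = 6$ available to $\eta$, a contradiction, so $3 \notin \Delta$. The second bullet is the list of gaps of $\Delta$ surviving the tabulation. The main obstacle is the case analysis itself, which requires careful simultaneous bookkeeping of the constraint on $\alpha_j - \alpha_i$ and the induced gap structure of the enclosing $\Delta$ for each candidate pair.
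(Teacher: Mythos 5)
Your setup coincides with the paper's: the hypothesis $\eta\notin\Delta$ forces cancellation of leading terms, so the minimum $m$ of the $\nu(f_i\phi_i)$ is attained at least twice; $\eta$ must be a gap of $\Gamma$, hence lies in $\{1,2,3,5,7,9,11,15\}$; and for a cancelling pair the difference $\alpha_j-\alpha_i=\nu(\phi_i)-\nu(\phi_j)$ must itself be a gap of $\Gamma$ (so $\nu(\phi_j)\ge 4$ and $\nu(\phi_i)\ge 6$), else $\alpha_j\in\alpha_i+\Gamma$ contradicts minimality. You diverge at the key step: you propose a finite enumeration of admissible pairs $(\alpha_i,\alpha_j)$ and a case-by-case check that every pair except $(0,2)$ either pushes $m$ up to $15$ or fills in all gaps of $\Delta$ above $m$. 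This does work --- e.g.\ $(0,1)$ forces $\nu(\phi_1)\ge 12$, so $m\ge 13$ and the only remaining candidate $\eta=15=1+14$ lies in $\Delta$; $(3,5)$ gives $m\ge 9$ but $11=3+8$ and $15=5+10$ lie in $\Delta$; every pair with $\alpha_i=2$ gives $m\ge 15$ --- but as written the tabulation, which is the entire content of the lemma, is asserted rather than performed, and you must actually carry it out (there are roughly twenty pairs to dispose of). The paper short-circuits all of this with a parity observation: $\nu(\phi_i)\ge 6$ gives $\eta>6$, hence $\eta\in\{7,9,11,15\}$ is odd; since every even integer $\ge 4$ lies in $\Gamma$, any odd $d\in\Delta$ with $d<\eta-2$ would force $\eta=d+(\eta-d)\in\Delta$; the two cancelling generators are bounded by $\eta-7$ and $\eta-5$, hence must be even, and the only even element of $\{0\}\cup(\NN\setminus\Gamma)$ other than $0$ is $2$. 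This replaces your case analysis with three lines.

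One step of yours is circular: you justify $1\notin\Delta$ by ``$1<\alpha_1=2$,'' but $\alpha_1=2$ means precisely that no generator equals $1$, i.e.\ $1\notin\Delta$, which is what you are trying to prove. The repair is the same absorption argument you use for $3$: if $1\in\Delta$ then $7=1+6$, $9=1+8$, $11=1+10$, $15=1+14$ all lie in $\Delta$, leaving no admissible value for $\eta$. (In the paper this is subsumed by the parity argument.) With the tabulation written out and this point repaired your proof is complete; note that your version even yields the slightly sharper conclusion $\eta\in\{7,9,11\}$, since $15=2+13\in\Delta$ once $\alpha_1=2$, which is consistent with the lemma as stated.
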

\begin{proof}
  Assume there exist $\phi_i \in \oO$ such that 
  $\eta = \nu(\sum f_i \phi_i) \notin \Delta$.
  In particular, we must have 
  $\nu(\sum f_i \phi_i) > \min \, \nu(f_i \phi_i)$, thus the lowest
  degree terms must cancel, thus there must be at least two of them.
  Say they are $f_j$ and $f_k$; let $\alpha_j < \alpha_k$.  We have
  \[ \eta > \alpha_k + \nu(\phi_k)  = \alpha_j + \nu(\phi_j)\]  

  We cannot
  have $\nu(\phi_k)=0$ since $\alpha_k$ is a necessary generator; thus
  $\nu(\phi_k) \ge 4$.  Since $\nu(\phi_j) > \nu(\phi_k)$ we also have
  $\nu(\phi_j) \ge 6$.  
  As $\eta > \alpha_j + \nu (\phi_j) \ge 6$ and $\eta \notin \Delta$, we must
  have $\eta \in \{7,9,11,15\}$.  Since $\eta\notin \Delta$, no odd
  number less than $\eta - 2$ can be in $\Delta$.  This implies that
  $1,3 \notin \Delta$ and $a_j, a_k$ are even.  This can only happen
  if $a_j = 0$ and $a_k = 2$. 
\end{proof}

\begin{lemma} \label{lem:sevenornine}
  Given two series
  \begin{eqnarray*}
    f_0 & = & 1+a_1 t + a_3 t^3 + \ldots \\
    f_1 & = & t^2(1+b_1 t + b_3 t^3 + \ldots) \\
  \end{eqnarray*}
  we see that  
  \begin{eqnarray*}
    \deg_t ( (t^6 + t^7)f_0 - t^4 f_1) \ge 7 & \text{with equality unless}
    &  b_1 - a_1 = 1 \\
    \deg_t((t^6 + t^7)f_1 - t^8 f_0) \ge 9 & \text{with equality unless} &  
    a_1 - b_1 = 1 \\
  \end{eqnarray*}
  The equations cannot hold simultaneously, so at least
  one of the series has the specified degree. 
\end{lemma}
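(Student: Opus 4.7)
The argument is entirely a low-order Taylor expansion; no structural input beyond the definitions of $f_0$ and $f_1$ is required. The plan is to expand each of the two differences through its first two relevant terms, read off the coefficient whose vanishing is in question, and then observe that the two equality conditions are incompatible.

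For the first inequality, I would expand both summands modulo $t^{8}$. Since $f_{0}\equiv 1+a_{1}t \pmod{t^{2}}$, one gets $(t^{6}+t^{7})f_{0}\equiv t^{6}+(a_{1}+1)t^{7}\pmod{t^{8}}$; since $f_{1}\equiv t^{2}+b_{1}t^{3}\pmod{t^{4}}$, one gets $t^{4}f_{1}\equiv t^{6}+b_{1}t^{7}\pmod{t^{8}}$. The $t^{6}$ contributions cancel, leaving a $t^{7}$ coefficient of $a_{1}+1-b_{1}$. Thus the valuation is $\geq 7$, with strict inequality precisely when $b_{1}-a_{1}=1$.

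The second inequality goes the same way, now modulo $t^{10}$. The product $(t^{6}+t^{7})f_{1}$ contributes $t^{8}+(b_{1}+1)t^{9}$ (both $t^{6}\cdot b_{1}t^{3}$ and $t^{7}\cdot t^{2}$ feed the $t^{9}$ coefficient), while $t^{8}f_{0}$ contributes $t^{8}+a_{1}t^{9}$. The leading terms cancel, and the $t^{9}$ coefficient is $b_{1}+1-a_{1}$, giving valuation $\geq 9$ with strict inequality precisely when $a_{1}-b_{1}=1$.

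The final clause is immediate: simultaneous equality would force $b_{1}-a_{1}=1$ and $a_{1}-b_{1}=1$, which is impossible. The only subtlety in the above, and it is barely one, is confirming that higher-order terms such as $a_{3}t^{3}$ in $f_0$ and $b_{3}t^{5}$ in $f_1$ cannot reach orders $t^{7}$ or $t^{9}$ in the relevant products; but every such term is multiplied by at least $t^{6}$, so it lands at order $\geq t^{9}$ or $\geq t^{11}$, beyond the coefficients that matter. So there is no genuine obstacle -- this lemma is pure bookkeeping whose only role is to feed into the stratification computation that follows.
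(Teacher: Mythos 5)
Your computation is correct: expanding both differences, the $t^6$ (resp.\ $t^8$) terms cancel and the next coefficients are $1+a_1-b_1$ (resp.\ $1+b_1-a_1$), which cannot both vanish. The paper omits the proof entirely, treating the lemma as an immediate expansion, and your direct bookkeeping is exactly the intended verification.
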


\begin{corollary} \label{cor:list}
  Let $0 \in \Delta \subset \NN$ be a $\Gamma$-module with minimal
  generators $\alpha_0, \alpha_1, \ldots$.  Choose lifts 
  $f_0, f_1, \ldots \in \CC[[t]]$.  Let $\Delta' = \nu((f_0,f_1,\ldots))$. 
  Then if $\Delta \ne \Delta'$, then $\Delta \to \Delta'$ appears on
  the following list. 
  \begin{itemize}
    \item $(0,2) \to (0,2,7), (0,2,9), (0,2,7,9)$ 
    \item $(0,2, 5) \to (0,2,5,7)$  
    \item $(0,2, 7) \to (0,2,7,9)$
    \item $(0,2, 9) \to (0,2,7,9), (0,2,9,11)$
    \item $(0,2, 11) \to (0,2,7), (0,2,7,9), (0,2,9,11)$
  \end{itemize}  
  For all modules $\Phi$ not occuring on the list, 
  \[\int \limits_{U_{i+\Phi}}  (1-a^2)^{m-1} \, d \chi = 
  (1-a^2)^{m(\Phi)-1}\]
  where $m(\Phi)$ is the number of generators of $\Phi$ as a
  $\Gamma$-module.  
  Moreover, if $\Phi = (0,2)$ or $(0,2,11)$, then $U_{i+\Phi} = \emptyset$, so
  \[\int \limits_{U_{i+\Phi}}  (1-a^2)^{m-1} \, d \chi =  0\]  
\end{corollary}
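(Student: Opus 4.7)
The plan is to organize the argument around Lemma \ref{lem:whojumps}, which restricts any nontrivial jump $\Delta \ne \Delta'$ to modules with $\alpha_1 = 2$ (so that $1, 3 \notin \Delta$ automatically) and constrains the ``new'' valuation $\eta \notin \Delta$ to $\{7, 9, 11, 15\}$, arising from a leading-term cancellation between the lifts of the first two generators. For every $\Delta \in \mathrm{Mod}(\Gamma)$ failing $\alpha_1 = 2$, we therefore have $\Delta' = \Delta$ for all choices of lifts, so by Theorem \ref{thm:stratification} the space $U_{i+\Delta}$ coincides with the affine space $V_{i+\Delta}$, the function $m$ is constantly equal to the number of minimal generators $m(\Delta)$, and the integral collapses to $(1-a^2)^{m(\Delta)-1}$.

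There are only finitely many $\Gamma$-modules $\Delta$ with $\alpha_1 = 2$, indexed by which of $5, 7, 9, 11$ belong to $\Delta$ subject to $\Gamma$-closure; this yields the eight modules appearing as sources or targets in the stated list. For each, I would compute the canonical generators $\tau_g$ of Definition \ref{def:stratification} explicitly in $\oO$, exploiting that $\NN \setminus \Gamma = \{1, 2, 3, 5, 7, 9, 11, 15\}$ is small and finite, and read off the Taylor coefficients $a_1, b_1$ from $f_0 = \mathbf{f}_0/t^i$ and $f_1 = \mathbf{f}_1/t^i$. Applying Lemma \ref{lem:sevenornine} then determines which of $\eta = 7$ or $\eta = 9$ is forced into $\Delta'$ for generic parameters, yielding the listed targets from each source module.

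For the empty-locus claims $U_{i+\Phi} = \emptyset$ with $\Phi \in \{(0,2), (0,2,11)\}$, note that Lemma \ref{lem:sevenornine} forces at least one of the degrees $7, 9$ into $\nu(J)$ for every parameter choice, and since neither of these lies in $(0,2)_\Gamma$ or $(0,2,11)_\Gamma$, the generated ideal always has valuation strictly containing $i + \Phi$. For the target-only modules $(0,2,5,7)$ and $(0,2,7,9)$, one checks directly that $\{7, 9, 11, 15\} \subset \Phi$, ruling out any jump and giving $U_{i+\Phi} = V_{i+\Phi}$. The module $(0,2,9,11)$ requires slightly more care since $7 \notin \Phi$: here one verifies that $U_{i+\Phi}$, while for some shifts cut out as a codimension-one linear subspace of $V_{i+\Phi}$, is still affine with $\chi = 1$ and constant number of generators $m = m(\Phi) = 4$, so the integral formula $(1-a^2)^{m(\Phi)-1}$ continues to hold.

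The main obstacle I anticipate is analyzing potential jumps with $\eta \in \{11, 15\}$: Lemma \ref{lem:sevenornine} controls only $\eta = 7$ and $\eta = 9$ from the lowest-order combinations of $f_0, f_1$, so ruling out (or accounting for) contributions at $\eta = 11, 15$ requires tracking higher-order Taylor coefficients and the explicit expansions of $\tau_{13}$ and $\tau_{17}$ in the $t$-adic completion $\oO$. A further subtlety is that the relevant coefficients $a_1, b_1$ depend both on the shift $i$, through the forced form of $\tau_i$ and $\tau_{i+2}$, and on the free parameters $S_{\lambda, j}$, so carefully distinguishing codimension-zero from codimension-one loci in each $V_{i+\Phi}$ is essential for computing the Euler characteristics correctly.
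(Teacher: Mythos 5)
Your overall strategy is the paper's: run Lemma \ref{lem:whojumps} over the finite list of $\Gamma$-modules in Figure \ref{fig:data} to bound the possible sources, and use Lemma \ref{lem:sevenornine} to bound the targets and to get emptiness of $U_{i+(0,2)}$ and $U_{i+(0,2,11)}$. But two steps are wrong as stated. First, you justify the constancy of $m$ on $U_{i+\Delta}$ by the fact that $\Delta$ is not a \emph{source} (``$\Delta'=\Delta$ for all choices of lifts''). That is the wrong condition: not being a source gives $U_{i+\Delta}=V_{i+\Delta}$, hence $\chi=1$, but constancy of $m$ requires that $\Delta$ not be a \emph{target}. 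If some proper submodule $N\subsetneq\Delta$ with fewer generators jumps to $\Delta$, then an ideal $I$ with $\nu(I)=i+\Delta$ may be generated by lifts of the generators of $N$, so $m(I)<m(\Delta)$. Your conclusion survives for modules with $\alpha_1\ne 2$ only because every target on the list contains $2$, so such modules are neither sources nor targets; this is exactly why the corollary excludes from its integral formula everything appearing on the list, on either side of an arrow. (Also, $3\notin\Delta$ is not automatic from $\alpha_1=2$: the modules $(0,2,3)$ and $(0,2,3,5)$ must be excluded using the separate hypothesis $1,3\notin\Delta$ of Lemma \ref{lem:whojumps}, not the condition $\alpha_1=2$ alone.)

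Second, your assertion that on $U_{i+(0,2,9,11)}$ the number of generators is constantly $m(\Phi)=4$, so that the integral equals $(1-a^2)^{3}$, is false. For an ideal $I$ of type $i+(0,2,9,11)$ we have $i+7\notin\nu(I)$, so by Lemma \ref{lem:sevenornine} the other distinguished combination of the generators of degrees $i$ and $i+2$ has valuation exactly $i+9$; hence the generator of degree $i+9$ is always redundant and $m\le 3$ everywhere on $U_{i+(0,2,9,11)}$ (the integral there, computed later in Section \ref{sec:4613}, is $(1-a^2)^2$). This error lies outside the literal scope of the corollary, since $(0,2,9,11)$ is on the list and the formula $(1-a^2)^{m(\Phi)-1}$ is not claimed for it, but it shows that the source/target distinction above is not a pedantic point. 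Finally, the obstacle you flag about $\eta\in\{11,15\}$ resolves easily: $15=2+13\in(0,2)_\Gamma$ lies in every source, so it is never a new element, and $11=7+4=5+6$ lies in $\Delta'$ automatically once $5$ or $7$ does; combined with the dichotomy of Lemma \ref{lem:sevenornine} this pins the targets down to those listed.
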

\begin{proof}
  All $\Gamma$ modules are listed in Figure \ref{fig:data}.  Checking
  the criterion of Lemma \ref{lem:whojumps} on each of them yields
  the list of possible $\Delta$.
  The list of possible $\Delta'$ comes from the criterion of 
  Lemma \ref{lem:sevenornine}:
  if $0$ and $2$ are in $\Delta$, then $7$ or $9$ is in $\Delta'$. 
  If a semigroup module $M$
  never occurs as one of the $\Delta$ above, 
  Theorem \ref{thm:stratification} implies that $U_{i+M}$ is an affine space.
  If $M$ never occurs as one of the $\Delta'$, then Theorem
  \ref{thm:stratification} implies that $\nu(I) = i + M \implies
  m(I) = m(M)$.
  The final statement of the corollary is immediate
  from Lemma \ref{lem:sevenornine}. 
\end{proof}

We proceed to analyse the remaining modules.  
Suppose $\Delta \in \mathrm{Mod}(\Gamma)$, 
$0,2 \in \Delta$, and $1,3 \notin \Delta$.  
Fix $i$ such that $i + \Delta \subset \Gamma$, and let 
$V_{i+\Delta}$ be the affine space of Definition \ref{def:stratification}.  
$V_{i+\Delta}$ has coordinate functions $\mathbf{a},\mathbf{b}$ 
giving respectively the 
coefficient of $x^{i+1}$ of the generator of degree $i$, and 
the coefficient of $x^{i+3}$ in the generator of degree $i+2$.  
Let $I$ be an ideal corresponding to some point in $V_{i + \Delta}$ such
that $a = \mathbf{a}(I)$ and $b = \mathbf{b}(I)$.   
Lemma \ref{lem:sevenornine} implies that $i+7 \in \nu(I)$ 
unless $b - a = 1$, and $i+9 \in \nu(I)$ unless
$a-b = 1$.  In fact, $b - a$ is 
identically 1 if and only if $i + 7 \notin \Gamma$,
identically $-1$ if and only if $i + 9 \notin \Gamma$, and otherwise
may assume any value.  This may be seen from the explicit form of a general
element of $\oO$:
\[c_0 + c_4 t^4 + c_6 (t^6+t^7) 
+ c_8 t^8 + c_{10}(t^{10} + t^{11}) 
+ c_{12}t^{12} + c_{13} t^{13} + c_{14}(t^{14} + t^{15}) + 
\sum_{n \ge 16} c_n t^n\]

We will now determine the $U_\mathfrak{j}$ of Definition \ref{def:stratification}
by computing their complements in the affine spaces $V_{\mathfrak{j}}$.  Recall
that the space $V_\mathfrak{j}$ depended on a choice of generators of 
$\mathfrak{j}$; we use the set of generators indicated (which is always the
minimal set of generators).   For $I$ an ideal of $\oO$, we say its
{\em type} is the semigroup ideal $\nu(I) \subset \Gamma$.

\begin{itemize}
  \item The 
    complement of $U_{i+(0,2,5)}$ inside $V_{i+(0,2,5)}$ will be the
    ideals whose type is $i + (0,2,5,7)$.  As $i+9 \in \Gamma$ since
    $i + 5 \in \Gamma$, the $i+7$ appears in the complement of a hyperplane
    if at all.  Thus $U_\Gamma$ has Euler characteristic 1.  On the other
    hand, we see in Corollary \ref{cor:list} that an ideal of type
    $i + (0,2,5)$ always has three generators.  Thus
    \[\int \limits_{U_{i+(0,2,5)}} \!\!\!\! (1-a^2)^{m-1} \, d \chi
    = (1-a^2)^2\]
  \item The complement of $U_{i+(0,2,7)}$ in $V_{i+(0,2,7)}$ consists of 
    semigroup ideals whose type is $i + (0,2,7,9)$.  Since $i+7$ is
    in the semigroup ideal, $i+9$ fails to be in the semigroup ideal 
    either on a hyperplane
    or in all of $U_{i+(0,2,7)}$.  Moreover when $i+9$ fails to be in
    the semigroup ideal, by Lemma \ref{lem:sevenornine}, the
    generators of degree $i, i+2$ generate the ideal.  Thus 
    \[\int \limits_{U_{i+(0,2,5)}} \!\!\!\! (1-a^2)^{m-1} \, d \chi
    = (1-a^2)\]
  \item $U_{i+(0,2,5,7)} = V_{i+(0,2,5,7)}$.  Since $i+9$ is in
    the semigroup ideal, the space on which
    the generator of degree seven is not needed is the complement
    of a hyperplane and thus has Euler characteristic zero.  Thus:
    \[\int \limits_{U_{i+(0,2,5,7)}} \!\!\!\!\!\! (1-a^2)^{m-1} \, d \chi
    = (1-a^2)^3\]
  \item $U_{i+(0,2,7,9)} = V_{i+(0,2,7,9)}$.  Since $7$ and $9$ are both
    in the semigroup ideal, the generator of degree $i+7$ is unnecessary
    in the complement of a hyperplane, and the generator of degree
    $i+9$ is unnecessary in the complement of a parallel hyperplane. 
    \[\int \limits_{U_{i+(0,2,7,9)}} \!\!\!\!\!\! (1-a^2)^{m-1} \, d \chi
    = 2(1-a^2)^2 - (1-a^2) \]
  \item The complement of $U_{i+(0,2,9)}$ in $V_{i+(0,2,9)}$ consists of the
    locus where the type is $(0,2,7,9)$ or $(0,2,9,11)$.  We already 
    understand that the first happens, if at all, on the the complement
    of a hyperplane.  Let $U'_{i+0,2,9}$ be the affine space on which the type
    is not $(0,2,7,9)$.  Here the generators may be written:
    \begin{eqnarray*}
      f & = & t^i + at^{i+1} + a_3 t^{i+3} + a_5 t^{i+5} + a_7 t^{i+7} + 
      a_{11} t^{i+11} \\
      g & = & t^{i+2} + (a+1) t^{i+3} + b_3 t^{i+5} + b_5 t^{i+7} + 
      b_9 t^{i+11} \\
      h & = & t^{i+9} + c t^{i+11}
    \end{eqnarray*}
    Writing $x=t^4$ and $y=t^6+t^7$, we see that 
    \[(x^2+xy(a+1))f - yg + 2h = 
    ((a+1)^2 + 2c + a_3 - b_3)t^{11} + O(t^{12})\]
    Now let $U''_{i+(0,2,9)}$ be the locus where
    the above coefficient of $t^{11}$ vanishes; since it is given
    as $c = ((b_3 - a_3) + (a+1)^2)/2$, it is isomorphic to affine
    space.  Restricting to $U''_{i+(0,2,9)}$, one checks that the
    only term of degree 11, modulo $t^{12}$, is 
    \[(2x - (a^2-a_3+b_3)y)g + 
    (2(ax^2+a_3 x y -y)+(a^2-a_3+b_3)(x^2+xy(a+1)))f\]
    and the coefficient of $t^{11}$ is 
    $C(a,a_3,a_5,b_3,b_5) = 2b_5-2a_5 + P(a,a_3,b_3))$, 
    where $P$ is some polynomial.  Evidently $U_{i+(0,2,9)}$ is
    the locus inside $U''_{i+(0,2,9)}$ where $C$ vanishes.  If
    either $i+5$ or $i+7$ is in $\Gamma$, then $C$ vanishes on
    a hypersurface isomorphic to affine space.  In fact, by
    inspection of $\Gamma$, one sees that
    whenever $i+(0,2,9,11) \subset \Gamma$,
    either $i+5$ or $i+7$ is in $\Gamma$.  In the case
    $i+(0,2,9,11) \not\subset \Gamma$ and hence $i+11 \notin \Gamma$, 
    then $C$ vanishes identically, $U_{i+(0,2,9)} = U''_{i+(0,2,9)}$, 
    which we already knew was an affine space.  In any event 
    $U_{i + (0,2,9)}$ has Euler characteristic
    $1$.  Finally 
    by Lemma \ref{lem:sevenornine} the generator of degree
    $i+9$ is superfluous, so: 
    \[\int \limits_{U_{i+(0,2,9)}} \!\!\!\! (1-a^2)^{m-1} \, d \chi
    = (1-a^2) \]
  \item The complement of $U_{i+(0,2,9,11)}$ inside $V_{i+(0,2,9,11)}$
    is the locus where the semigroup type is $i+(0,2,7,9)$.  We have
    seen that the $i+7$ appears in the complement of a hyperplane,
    if at all.  Thus $U_{i+(0,2,9,11)}$ is isomorphic to affine space.
    We have also seen that the generator of degree $i+9$ is always
    superfluous;  by the argument given for $(0,2,9)$, 
    the generator of degree $i+11$ is superfluous
    in the complement of an affine space.  Thus: 
    \[\int \limits_{U_{i+(0,2,9,11)}} \!\!\!\!\!\! (1-a^2)^{m-1} \, d \chi
    = (1-a^2)^2 \]
\end{itemize}

This completes the determination of the integrals appearing
in Equation (\ref{eq:breakdown}).  Summing the contributions yields
complete agreement with the HOMFLY polynomial.

\begin{figure}[lh]
\[
\begin{array}{|c||c|r|}
\hline
\Delta \in \mathrm{Mod}(\Gamma) & 8 - \#\NN \setminus \Delta & 
i| i+ \Delta \subset \Gamma \\ 
\hline
(0) & 0 & 0,4,6,8,10,12,13,14,16+ \\
\hline
(0,1) & 6 &  12, 13, 16+ \\
(0,3) & 5 & 10, \phantom{12,} 13, 14, 16+ \\
(0,5) & 4 & 8,\phantom{10,} 12, 13, 14, 16+ \\
(0,7) & 3 & 6, \phantom{8,} 10, 12, 13, 14, 16+ \\
(0,9) & 2 & 4, \phantom{6,} 8, 10, 12, 13, 14, 16+ \\
(0,11) & 2 & 6,8,10, 12, 13, 14, 16+ \\
(0,15) & 1 & 4,6,8,10,12,13,14,16+ \\
\hline
(0,2) & 2  & 4,6,8,10,12,\phantom{13,}14,16+ \\
\hline
(0,1,3) & 7 & 13,16+ \\
(0,3,5) & 6 & 13,14,16+ \\
(0,5,7) & 5 & 12,13,14,16+ \\
(0,7,9) & 4 & 10,12,13,14,16+ \\
(0,9,11) & 3 & 8,10,12,13,14,16+ \\
\hline
(0,1,2) & 7 & 12,\phantom{13,14,}16+ \\
(0,2,3) & 6 & 10,\phantom{12,13,}14,16+ \\
(0,2,5) & 5 & 8, \phantom{10,} 12, \phantom{13,}14, 16+ \\
(0,2,7) & 4 & 6,\phantom{8,} 10, 12, \phantom{13,}14, 16+ \\
(0,2,9) & 3 & 4,\phantom{6,} 8, 10, 12, \phantom{13,}14, 16+ \\
\hline
(0,2,11) & 3 &  6,8,10,12,\phantom{13,}14,16+ \\
\hline
(0,1,2,3) & 8 & 16+ \\
(0,2,3,5) & 7 & 14,16+ \\
(0,2,5,7) & 6 & 12, \phantom{13,}14, 16+ \\
(0,2,7,9) & 5 & 10, 12, \phantom{13,}14, 16+ \\
(0,2,9,11) & 4 & 8, 10, 12, \phantom{13,} 14, 16+ \\
\hline
\end{array}
\]
\caption{ \label{fig:data} The combinatorial data required by Equation (\ref{eq:breakdown}).}
\end{figure}

\vspace{+20pt}
\noindent
Alexei Oblomkov \\
Department of Mathematics \\
University of Massachusetts, Amherst \\
Amherst, MA 01003 \\
{\tt oblomkov@math.umass.edu} \\

\vspace{+10pt}
\noindent Vivek Shende \\
Department of Mathematics \\
Massachusetts Institute of Technology \\
Cambridge, MA 02139 \\
{\tt vivek@math.mit.edu} \\

\end{document}